 \tikzset{
  on each segment/.style={
    decorate,
    decoration={
      show path construction,
      moveto code={},
      lineto code={
        \path [#1]
        (\tikzinputsegmentfirst) -- (\tikzinputsegmentlast);
      },
      curveto code={
        \path [#1] (\tikzinputsegmentfirst)
        .. controls
        (\tikzinputsegmentsupporta) and (\tikzinputsegmentsupportb)
        ..
        (\tikzinputsegmentlast);
      },
      closepath code={
        \path [#1]
        (\tikzinputsegmentfirst) -- (\tikzinputsegmentlast);
      },
    },
  },
  mid arrow/.style={postaction={decorate,decoration={
        markings,
        mark=at position 0.6 with {\arrow[#1]{stealth}} 
      }}},
}
\numberwithin{figure}{section}
\setlist[itemize]{leftmargin=35pt}
\setlist[enumerate]{leftmargin=35pt}
\newcommand{\checks}[1]{{\color{cyan}{#1}}} 
\newtheorem{theorem}{Theorem}[section]
\newtheorem{lemma}[theorem]{Lemma}
\newtheorem{corollary}[theorem]{Corollary}
\newtheorem{main theorem}[theorem]{Main Theorem}
\newtheorem{definition}[theorem]{Definition}
\newtheorem{remark}[theorem]{Remark}
\newtheorem{example}[theorem]{Example}
\newtheorem{question}[theorem]{Question}
\numberwithin{equation}{section}
\def\<{\langle} 
\def\>{\rangle} 
\def\NN{\mathbb{N}} 
\newcommand{\Ker}{\operatorname{Ker}}
\newcommand{\Ima}{\operatorname{Im}}
\newcommand{\modcat}{\mathsf{mod}}
\newcommand{\kk}{\mathds{k}} 
\newcommand{\Q}{\mathcal{Q}} 
\def\I{\mathcal{I}}
\newcommand{\e}{\varepsilon}
\newcommand{\Hom}{\mathrm{Hom}} %
\newcommand{\End}{\mathrm{End}} %
\newcommand{\Ext}{\mathrm{Ext}} %
\newcommand{\ol}[1]{\overline{#1}}
\newcommand{\w}[1]{\widetilde{#1}}
\newcommand{\To}[1]{\mathop{-\!\!\!-\!\!\!\longrightarrow}\limits^{#1}}
\def\alg{\mathit{\Lambda}}
\def\emb{\mathfrak{e}}
\def\calR{\mathcal{R}}
\def\heart{{\color{red}\pmb{\heartsuit}}}
\def\kk{\Bbbk}
\def\KK{\mathbb{K}}
\def\=<{\leqslant}
\def\>={\geqslant}
\def\pdim{\mathrm{proj.dim}}
\def\gldim{\mathrm{gl.dim}}
\def\brick{\mathsf{brick}}
\def\sbrick{\mathsf{sbrick}}
\def\rad{\mathrm{rad}}
\def\calA{\mathcal{A}}
\def\calB{\mathcal{B}}
\def\heart{{\color{red}\pmb{\heartsuit}}}
\def\orcid{
\begin{tikzpicture}[baseline=-1mm]
\filldraw[Green!35] (0,0) circle (5pt);
\filldraw[white] (0,0) node{\tiny\textbf{iD}};
\end{tikzpicture}
}
\newcommand{\defines}{\it\color{red}}
\title{\bf Some functors preserving exceptionality
}
\author{
Dajun Liu$^{\ref{Author1}, \href{https://orcid.org/0009-0001-6073-7587}{\orcid}\ref{orcid1}}$,
Hanpeng Gao$^{\ref{Author2}, \href{https://orcid.org/0000-0001-7002-4153}{\orcid}\ref{orcid2}}$,
Yu-Zhe Liu$^{\ref{Author3}, \href{https://orcid.org/0009-0005-1110-386X}{\orcid}\ref{orcid3},~\ref{CorrespondingAuthor}}$
}
\date{ }
\begin{document}






\maketitle

\begin{enumerate}[label=\textbf{\color{red}\arabic*}] \footnotesize
  \item
    \begin{center}
      Anhui Polytechnic University, Wuhu 241000 Anhui, China;

      E-mail: \url{liudajun@ahpu.edu.cn}
    \end{center} \label{Author1} \vspace{2mm}

  \item
    \begin{center}
      School of Mathematical Sciences, Anhui University,
      Hefei 230601, Anhui, China;

      E-mail: \url{hpgao@ahu.edu.cn}
    \end{center} \label{Author2} \vspace{2mm}

  \item
    \begin{center}
      School of Mathematics and Statistics, Guizhou University,
      Guiyang 550025, Guizhou, China;

      E-mail:  \url{liuyz@gzu.edu.cn} / \url{yzliu3@163.com}
    \end{center} \label{Author3}
\end{enumerate}
\hspace{1mm}
\begin{enumerate}[label=\textbf{\color{red}$\dag$}]
  \item \footnotesize
    \begin{center}
      Corresponding author
    \end{center} \label{CorrespondingAuthor}
\end{enumerate}
\hspace{1mm}
\begin{enumerate}[leftmargin=6.5cm] \footnotesize
  \item[\orcid]
      ORCID: \href{https://orcid.org/0009-0001-6073-7587}{0009-0001-6073-7587}
      \label{orcid1}
  \item[\orcid]
      ORCID: \href{https://orcid.org/0000-0001-7002-4153}{0000-0001-7002-4153}
      \label{orcid2}
  \item[\orcid]
      ORCID: \href{https://orcid.org/0009-0005-1110-386X}{0009-0005-1110-386X}
      \label{orcid3}
\end{enumerate}

\hspace{2mm}


\begin{enumerate}
  \item[] \footnotesize
  \textbf{Abstract}:
    We constructed some tensor functors that send each exceptional sequence in a module category to another exceptional sequence in another module category by using split extensions and recollements.
\end{enumerate}

\begin{enumerate}
  \item[] \footnotesize
    \textbf{2020 Mathematics Subject Classification}:
     16E30, 
     16G10, 
     16S70. 
     \label{2020MSC}
\end{enumerate}

\begin{enumerate}
  \item[] \footnotesize
    \textbf{Keywords}:
     Exceptional sequences; split-by-nilpotent extensions; recollements
     \label{Keywords}
\end{enumerate}

%


\section{Introduction}

The notion of exceptional sequences was introduced by Gorodentsev and Rudakov \cite{GR1987} in the study of exceptional vector bundles over $\mathbb{P}^2$.
Later, this concept was adapted to the quiver representation framework established by Crawley--Boevey \cite{CB1993}
and Ringel \cite{R1994}.
In recent years, topics on exceptional sequences have remained highly popular. In triangulated categories,
Aihara and Iyama \cite{AI2012} show that there is a bijection between full exceptional sequences and silting objects.
For any finite-dimensional algebra, Buan and Marsh \cite{BM2021} introduce
the notions of $\tau$-exceptional and signed $\tau$-exceptional sequences
and prove that for a fixed algebra of rank $n$, and for any positive integer $t\leqslant n$,
there is a bijection between the set of signed $\tau$-exceptional sequences of length $t$,
and ordered support $\tau$-rigid objects with $t$ indecomposable direct summands.
The notion of weak exceptional sequences was introduced by Sen \cite{S2021},
which can be viewed as another modification of the standard case, different from the works of Buan and Marsh \cite{BM2021}.
Westin and Thuresson \cite{WT2022} provide a classification of generalized tilting modules and full exceptional sequences for a certain family of quasi-hereditary algebras, specifically dual extension algebras.
The theory of tilting over split-by-nilpotent extensions
and recollements have garnered significant attention. For example, Assem and Marmaridis \cite{AM1998},
as well as Assem and Zacharia \cite{AZ2003}, along with Liu and Wei \cite{LW2022},
and Suarez \cite{Sua2020} have investigated numerous interesting results concerning the relationship between tilting modules,
tilting pairs and support $\tau$-tilting modules in $\modcat(A)$ and induced objects in $\modcat(R)$.

A Recollements of Abelian categories, denoted by $\calR(\mathcal{A},\mathcal{B},\mathcal{C})$, is a diagram
\begin{align} \label{recoll diagram}
\xymatrix@C=2cm{
  \mathcal{A}
  \ar[r]^{i_*}
&  \mathcal{B}
  \ar[r]^{j^*}
  \ar@/_1.5pc/[l]_{i^*}
  \ar@/^1.5pc/[l]^{i^!}
&  \mathcal{C},
  \ar@/_1.5pc/[l]_{j_!}
  \ar@/^1.5pc/[l]^{j_*}
}
\end{align}
such that
\begin{enumerate}[label=(R\arabic*), leftmargin=1.6cm]
  \item $(i^*, i_*)$, $(i_*, i^!)$, $(j_!, j^*)$, $(j^*, j_*)$ are adjoint pairs; \label{R1}
  \item the functors $i_*$, $j_!$, and $j_*$ are fully faithful; \label{R2}
  \item $\mathrm{Im}(i_*)=\mathrm{Ker}(j^*)$. \label{R3}
\end{enumerate}
Recollements of Abelian and triangulated categories were introduced by Beilinson, Bernstein, and Deligne \cite{BBDG1982} in connection with derived categories of sheaves on topological spaces,
with the idea that one triangulated category may be ``glued together'' from two others.
This concept plays an important role in the representation theory of algebras, and many algebraists and authors have researched on the recollements,
for example, weighted projective line \cite{Chen2012},
Abelian and triangulated categories \cite{zhang2017subcate, FP2004, PV2019, ZhangY2023},
Homological theory \cite{Psa2014, QinHan2016, Zhang2017, ZZ2020JAA},
Gorenstein-Homological theory \cite{Zhang2013Gorenstein, ZZ2020},
and tilting theory \cite{M2018,MZ2020, MXZ2022}, etc.
Psaroudakis provided an important recollement of module categories which is the form $\calR(\modcat(A/A\e A), A, \e A\e)$
in \cite[Example 2.7]{Psa2014} (see Example \ref{exp:recollement} in this paper),
where $\e$ is an idempotent of $A$, such that $i^*$ and $j_!$ are two tensor functors
\begin{align*}
  i^* = - \otimes_A{_A(A/A\e A)}:
\ & \modcat(A) \to \modcat(A/A\e A) \\
\text{and~}  j_! = -\otimes_{\e A\e} \e A:
\ & \modcat(\e A \e) \to \modcat (A)
\end{align*}
of module categories.

On the other hand, we know that split extensions $\xi: R\to A$ of an algebra $A$ by a nilpotent bimodule,
introduced by Assem and Zacharia in \cite{AZ2003}, is also an important concept providing tensor functors
\begin{align*}
  -\otimes_A {_AR_R}:
\ & \modcat(A) \to \modcat(R) \\
\text{and~}  -\otimes_R~{_RA_A}:
\ & \modcat(R) \to \modcat (A)
\end{align*}
of module categories. Then the following question is natural:

\begin{question}
Let $F$ be a tensor functor of module categories from $\modcat(A)$ to $\modcat(B)$.
Then for each exceptional sequence $(M_{1}, \ldots, M_{r})$ in $\modcat(A)$,
when $F$ satisfies what conditions, $(F(M_{1}), \ldots, F(M_{r}))$ is an exceptional sequence?
\end{question}

We want to answer the above question by using recollements and split extensions.
Then we obtain two main results of this paper.

\begin{theorem}
Let $\xi:R\to A$ be a split extension of $A$ by a nilpotent bimodule $Q$
and $\calR(A', A, A'')$ be a recollement of module categories.
\begin{itemize}
\item[{\rm(1)}] {\rm(}Theorem \ref{Th2.1}{\rm)}
For any  exceptional sequence $(M_1,\ldots,M_r)$ in $\modcat(A)$, if:
\begin{itemize}
  \item $R={_AR}$ is a projective left $A$-module;
  \item $\Hom_{A}(M_k, M_k\otimes_{A}Q) =0$ $(1\=< k \=< r)$
     and $\Hom_{A}(M_j, M_i\otimes_{A}Q)=0$ $(1\=< i< j\=< r)$ hold;
  \item $\Ext^{n}_{A}(M_k, M_k\otimes_{A}Q) =0$ $(1\=< k \=< r)$
     and $\Ext^{n}_{A}(M_j, M_i\otimes_{A}Q)=0$ $(1\=< i< j\=< r)$ hold,
\end{itemize}
then $(M_1\otimes_{A}R, \ldots, M_r\otimes_{A}R)$ is an exceptional sequence in $\modcat(R)$.

\item[{\rm(2)}] {\rm(}Theorem \ref{Th3.18}{\rm)}
For any exceptional sequence $(X_1, \ldots, X_t)$ in $\modcat(A')$
and any exceptional sequence $(Y_1, \ldots, Y_u)$ in $\modcat(A'')$,
if $i^{*}$ and $i^{!}$ are exact functors, then $(i_*(X_1)$, $\ldots$, $i_*(X_t))$
and $(j_!(Y_1)$, $\ldots$, $j_!(Y_u))$ are exceptional sequences in $\modcat(\alg)$.
\end{itemize}
\end{theorem}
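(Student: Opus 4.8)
The plan is to reduce every assertion to a comparison of $\Hom$- and $\Ext$-groups before and after applying the relevant functor, and then to substitute the vanishing hypotheses directly. For part (1), recall that a split extension $\xi\colon R\to A$ by the nilpotent bimodule $Q$ yields a decomposition ${}_AR_A\cong A\oplus Q$ of $A$-bimodules, so that for every $A$-module $N$ the restriction of $N\otimes_A R$ along $A\to R$ is $\mathrm{res}(N\otimes_A R)\cong N\oplus(N\otimes_A Q)$. Since ${}_AR$ is projective, the functor $-\otimes_A R$ is exact and sends projectives to projectives; hence for a projective resolution $P_\bullet\to M$ over $A$ the complex $P_\bullet\otimes_A R\to M\otimes_A R$ is a projective resolution over $R$. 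Combining this with the tensor--hom adjunction $\Hom_R(-\otimes_A R,L)\cong\Hom_A(-,\mathrm{res}\,L)$, I would establish for all $n\>=0$ the natural isomorphism
\[
\Ext^n_R(M\otimes_A R,\ N\otimes_A R)\ \cong\ \Ext^n_A(M,N)\ \oplus\ \Ext^n_A(M,N\otimes_A Q).
\]

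With this isomorphism in hand, part (1) is immediate. Taking $M=N=M_k$ and $n\>=1$ gives $\Ext^n_R(M_k\otimes_A R,M_k\otimes_A R)=0$, while for $n=0$ the hypothesis $\Hom_A(M_k,M_k\otimes_A Q)=0$ forces the natural ring homomorphism $\End_A(M_k)\to\End_R(M_k\otimes_A R)$, $f\mapsto f\otimes R$, to be an isomorphism: it is injective because restricting $f\otimes R$ along $A\to R$ recovers $f$ on the summand $M_k$, and surjective because the obstruction to an $R$-endomorphism being induced lies precisely in the vanishing summand $\Hom_A(M_k,M_k\otimes_A Q)$. Thus $\End_R(M_k\otimes_A R)$ is a division ring and each $M_k\otimes_A R$ is exceptional. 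For $i<j$ the two summands $\Hom_A(M_j,M_i)$ and $\Hom_A(M_j,M_i\otimes_A Q)$ both vanish (the first by exceptionality of the original sequence, the second by hypothesis), and likewise for $\Ext^n$; hence $\Hom_R(M_j\otimes_A R,M_i\otimes_A R)=0=\Ext^n_R(M_j\otimes_A R,M_i\otimes_A R)$, which is exactly what is required.

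For part (2) I would argue uniformly: both $i_*$ and $j_!$ are fully faithful, so each already induces ring isomorphisms on endomorphism rings and isomorphisms $\Hom_{\alg}(FX,FX')\cong\Hom(X,X')$; the content is therefore to upgrade these to $\Ext^n$. For $i_*$, being a two-sided adjoint (of $i^*$ on the left and of $i^!$ on the right) it is exact, and since $i^!$ is exact its left adjoint $i_*$ preserves projectives. Applying $i_*$ to a projective resolution of $X$ in $\modcat(A')$ thus yields a projective resolution of $i_*X$ in $\modcat(\alg)$, and full faithfulness gives $\Ext^n_{\alg}(i_*X,i_*X')\cong\Ext^n_{A'}(X,X')$ for all $n$. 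Substituting the exceptionality and orthogonality of $(X_1,\dots,X_t)$ then shows $(i_*X_1,\dots,i_*X_t)$ is exceptional.

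The functor $j_!$ is the genuinely delicate case and is where I expect the main obstacle. Here $j_!$ is only a left adjoint, hence a priori merely right exact; it does preserve projectives (because $j^*$, having both adjoints $j_!$ and $j_*$, is exact), but to run the same resolution argument I must first prove that $j_!$ is in fact exact, and this is precisely where the hypothesis that $i^*$ is exact enters. Using the recollement identities $i^*j_!=0$, $i^*i_*\cong\mathrm{id}$, $j^*j_!\cong\mathrm{id}$, and $\Ima(i_*)=\Ker(j^*)$, I would show $L_q j_!=0$ for $q\>=1$ as follows: applying the exact functor $j^*$ to $j_!P_\bullet$, for $P_\bullet\to Y$ a projective resolution, gives $j^*L_qj_!Y=0$, so $L_qj_!Y\in\Ker(j^*)=\Ima(i_*)$; on the other hand the Grothendieck spectral sequence for the composite $i^*j_!=0$ collapses because $i^*$ is exact, yielding $i^*L_qj_!Y=0$; writing $L_qj_!Y\cong i_*D$ and using $i^*i_*\cong\mathrm{id}$ forces $D=0$, whence $L_qj_!Y=0$. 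With $j_!$ now exact and projective-preserving, the resolution argument gives $\Ext^n_{\alg}(j_!Y,j_!Y')\cong\Ext^n_{A''}(Y,Y')$, and substituting the data of $(Y_1,\dots,Y_u)$ completes the proof. The crux of the whole argument is thus the exactness of $j_!$; everything else is a formal manipulation of adjunctions and resolutions.
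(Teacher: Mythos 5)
Your proposal is correct, and in part (1) it is essentially the paper's own argument: the paper proves your key isomorphism in two steps (Lemma \ref{Lem3.1}: for an adjoint pair $(F,G)$ with $F$ exact and projective-preserving, $\Ext^n_{\calB}(F(M),F(N))\cong\Ext^n_{\calA}(M,GF(N))$; then Corollary \ref{Cor3.2} plus the bimodule splitting ${_AR_A}\cong A\oplus Q$), and then substitutes the vanishing hypotheses in Lemmas \ref{Lem3.5} and \ref{Lem3.6} exactly as you do — your $n=0$ discussion via the map $f\mapsto f\otimes R$ is a roundabout restatement of the paper's direct computation $\Hom_R(M\otimes_AR,M\otimes_AR)\cong\Hom_A(M,M)\oplus\Hom_A(M,M\otimes_AQ)\cong\KK$. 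In part (2) you follow the same skeleton (exactness and projective-preservation of $i_*$ and $j_!$, then transfer of $\Hom$ and $\Ext$), but with two genuine differences. First, where the paper's Lemma \ref{Lem3.14} simply cites Ma--Xie--Zhao \cite{MXZ2022} for the exactness of $j_!$, you supply a self-contained derived-functor proof: $j^*$ exact with $j^*j_!\cong\mathrm{id}$ forces $L_qj_!Y\in\Ker(j^*)=\Ima(i_*)$ for $q\>=1$, and $i^*$ exact with $i^*j_!=0$ and $i^*i_*\cong\mathrm{id}$ then kills it (the spectral-sequence language is overkill — exact functors commute with homology — but the argument is sound and uses only the recollement axioms \ref{R1}--\ref{R3} and Lemma \ref{2.6}). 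Second, you transfer $\Ext^n$ by applying full faithfulness of $i_*$ (resp.\ $j_!$) degreewise to the image of a projective resolution, whereas the paper reuses Lemma \ref{Lem3.1} through the adjunctions $(i_*,i^!)$ and $(j_!,j^*)$ together with the natural isomorphisms $i^!i_*\cong 1$ and $j^*j_!\cong 1$ of Lemma \ref{2.6}(3). Your route buys a proof independent of the external reference and avoids the unit/counit bookkeeping; the paper's route is shorter and lets the single Lemma \ref{Lem3.1} serve both theorems uniformly. One cosmetic slip: over the algebraically closed field $\KK$ your conclusion in part (1) should be $\End_R(M_k\otimes_AR)\cong\KK$ itself, as condition \ref{E1} demands, not merely that it is a division ring — though your one-dimensionality count does give this.
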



\section{Preliminaries} \label{sect:pre}

In this section, we will give some terminologies and some preliminary results. Throughout this article, $\KK$ denotes an algebraically closed field, and $n$ represents a positive integer. Let $A$ be a finite-dimensional $\KK$-algebra. We consistently assume that the category $A$-mod consisting of finitely generated right $A$-modules. Denote the projective dimensions of module $M_A$ by $\pdim(M_A)$.

\subsection{Exceptional Sequences} \label{subsect:gent}
Recall, as seen in \cite{GR1987}, that an indecomposable left (or right) $A$-module $M$ is called {\defines exceptional}  provided that
\begin{enumerate}[label=(E\arabic*), leftmargin=1.5cm]
  \item $\End_{A}M$ is isomorphic to $\KK$; \label{E1}
  \item $\Ext_{A}^{n}(M, M) =0$, for any positive integer $n$.  \label{E2}
\end{enumerate}
A module pair $(M_{i}, M_{j})$ of exceptional modules in $\modcat(A)$ is called an {\defines exceptional pair} if
\begin{enumerate}[label=(E\arabic*$'$), leftmargin=1.6cm]
  \item Hom$(M_{j}, M_{i})=0$;  \label{E1'}
  \item Ext$_{A}^{n}(M_{j}, M_{i}) =0$, for any positive integer $n$.  \label{E2'}
\end{enumerate}
Furthermore, a module sequence $(M_{1}, M_{2}, \ldots, M_{r})$ consisting of exceptional modules is deemed an {\defines exceptional sequence}
if every pair $(M_{i}, M_{j})$ with $1 \=< i <j \=< r$ satisfies the above criteria for being exceptional. It is said to be {\defines complete} if $r = r_A$, the rank of the Grothendieck group $K(A)$.

Recall that the following definitions are given in \cite{Asai2020}.
\begin{itemize}
\item[(1)] $S\in \modcat(A)$ is called a {\defines brick} if $\Hom_{A}(S, S)$ is isomorphic to $\KK$.
  The set of isoclasses of bricks in $\modcat(A)$ will be denoted by $\brick(A)$.
\item[(2)] A subset $\mathcal{S}\subseteq$ $\brick(A)$ is called a {\defines semibrick} if $\mathrm{Hom}_{A}(S_1,S_2)=0$ for any $S_1\neq S_2 \in \mathcal{S}$. The set of semibricks in $A\modcat$ will be denoted by $\sbrick(A)$.
\end{itemize}

It is evident that every simple module is a brick, and a set of isoclasses of simple modules forms a semibrick. However, while every exceptional module is indeed a brick, a sequence
$(S_1, S_2, \cdots, S_r)$ of isoclasses of simple modules does not necessarily constitute an exceptional sequence. As demonstrated in \cite{Gao2021}, there exists a bijection between
$\tau$-tilting modules and sincere left finite semibricks, facilitating the construction of (sincere) semibricks over split-by-nilpotent extensions.

\subsection{Split-by-nilpotent extensions}
Let $A$, $R$ be two finite dimensional algebras.

\begin{definition}[\!\!{\cite[Definition 1.1]{AZ2003}}] \rm \label{def:split ext}
We say that $R$ is a {\defines split extension} of $A$ by the nilpotent $(A,A)$-bimodule $Q$,
or {\defines simply a split-by-nilpotent extension} if there exists a split surjective algebra morphism $\xi:R\to A$ whose kernel $Q := \Ker(\xi) = \{ r\in R \mid \xi(r)=0 \}$ is contained in the radical $\rad R$ of $R$.
\end{definition}

\begin{remark} \label{rmk-2.5} \rm
Let $R$ be a split-by-nilpotent extension of $A$ by the nilpotent bimodule $Q$. Clearly, the short exact sequence of $(A,A)$-bimodules
\begin{center}
$0\to ~_{A}Q_{A}\to ~_{A}R_{A}\to ~_{A}A_{A}\to 0$
\end{center}
splits. Therefore, there exists an isomorphism $_{A}R_{A} \cong A\oplus {_{A}Q_{A}}$.
The module categories $\modcat(A)$ and $\modcat(R)$ are related by the following four functors
\begin{align*}
& -\otimes_A R: \modcat(A) \to \modcat(R), \\
& -\otimes_R A: \modcat(R) \to \modcat(A), \\
& \Hom_A({_AR},-): \modcat(A) \to \modcat(R), \\
& \Hom_R({_RA},-): \modcat(R) \to \modcat(A).
\end{align*}
Moreover, we have
\begin{align*}
& (-\otimes_A R) \otimes_R A \simeq 1_{\modcat(A)},  \\
& \Hom_R(A_R, \Hom_A(R_A,-)) \simeq 1_{\modcat(R)}.
\end{align*}
\end{remark}

\subsection{Recollements of Abelian categories}

We list some properties of recollements (see \cite{M2018, Psa2014, PV2019}) in this section, which will be used in the sequel.

\begin{lemma}\label{2.6} Let $(\mathcal{A}, \mathcal{B}, \mathcal{C})$ be a recollement of abelian categories. Then we have
\begin{itemize}
  \item[{\rm(1)}] $i^* j_{!}=0=i^{!} j_*$;
  \item[{\rm(2)}] the functors $i_*$ and $j^*$ are exact, $i^*$ and $j_{!}$are right exact, and
             $i^{!}$and $j_*$ are left exact;
  \item[{\rm(3)}]  all the natural transformations

             $$i^* i_* \rightarrow 1_{\mathcal{A}},  1_{\mathcal{A}} \rightarrow i^{!} i_*,  1_{\mathcal{C}} \rightarrow j^* j_{!},  j^* j_* \rightarrow 1_{\mathcal{C}}
              $$
              are natural isomorphisms;

  \item[{\rm(4)}]$\text { if } i^* \text { is exact, then } i^{!} j_{!}=0 \text {, and if } i^{!} \text {is exact, then } i^* j_*=0 \text {. }$
\end{itemize}
\end{lemma}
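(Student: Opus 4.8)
The plan is to obtain all four items from the axioms \ref{R1}--\ref{R3} together with two pieces of standard categorical folklore: a functor possessing a right (resp.\ left) adjoint is left (resp.\ right) exact, and for an adjoint pair the unit (resp.\ counit) is invertible exactly when the left (resp.\ right) adjoint is fully faithful. Items (2) and (3) then become pure bookkeeping. For (2) I read off the adjunctions in \ref{R1}: $i_*$ is a right adjoint (of $i^*$) and a left adjoint (of $i^!$), so it is both left and right exact, hence exact; likewise $j^*$ is exact, being right adjoint to $j_!$ and left adjoint to $j_*$. Each of $i^*$ and $j_!$ occurs only as a left adjoint, so it is right exact, while $i^!$ and $j_*$ occur only as right adjoints, so they are left exact. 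For (3) I feed \ref{R2} into the fully-faithfulness criterion: fully faithfulness of $i_*$ makes the unit $1_{\mathcal A}\to i^!i_*$ of $(i_*,i^!)$ and the counit $i^*i_*\to 1_{\mathcal A}$ of $(i^*,i_*)$ invertible; fully faithfulness of $j_!$ makes the unit $1_{\mathcal C}\to j^*j_!$ invertible; and fully faithfulness of $j_*$ makes the counit $j^*j_*\to 1_{\mathcal C}$ invertible.

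For (1) I argue by Yoneda. Chaining the adjunctions $(i^*,i_*)$ and $(j_!,j^*)$ gives, for all $A\in\mathcal A$ and $C\in\mathcal C$,
\[
\Hom_{\mathcal A}(i^*j_!C,\,A)\;\cong\;\Hom_{\mathcal B}(j_!C,\,i_*A)\;\cong\;\Hom_{\mathcal C}(C,\,j^*i_*A),
\]
and $j^*i_*A=0$ because $i_*A\in\mathrm{Im}(i_*)=\mathrm{Ker}(j^*)$ by \ref{R3}; hence $i^*j_!C=0$. The identity $i^!j_*=0$ is the formal dual, computing $\Hom_{\mathcal A}(A,i^!j_*C)\cong\Hom_{\mathcal B}(i_*A,j_*C)\cong\Hom_{\mathcal C}(j^*i_*A,C)=0$ via $(i_*,i^!)$ and $(j^*,j_*)$.

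The substance of the lemma is (4), and the crux is a preliminary claim: the counit $\varepsilon_B\colon i_*i^!B\to B$ is always a monomorphism (dually, the unit $\eta_B\colon B\to i_*i^*B$ is always an epimorphism). To prove the mono claim, set $K=\mathrm{Ker}\,\varepsilon_B$. The triangle identity for $(i_*,i^!)$ together with the invertible unit from (3) shows that $i^!(\varepsilon_B)$ is an isomorphism, so applying the left-exact functor $i^!$ to $0\to K\to i_*i^!B\xrightarrow{\varepsilon_B}B$ yields $i^!K=0$. Since $j^*$ is exact and $j^*i_*=0$, the subobject $K\hookrightarrow i_*i^!B$ also satisfies $j^*K=0$, so $K\in\mathrm{Ker}(j^*)=\mathrm{Im}(i_*)$ by \ref{R3}; writing $K\cong i_*K'$ and using $i^!i_*\cong 1$ from (3) gives $K'\cong i^!K=0$, whence $K=0$. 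With this in hand the two implications of (4) are immediate: if $i^*$ is exact, apply it to the monomorphism $i_*i^!j_!C\hookrightarrow j_!C$ to get a monomorphism $i^*i_*i^!j_!C\hookrightarrow i^*j_!C=0$, and since $i^*i_*\cong 1$ this forces $i^!j_!C=0$; dually, if $i^!$ is exact, apply it to the epimorphism $j_*C\twoheadrightarrow i_*i^*j_*C$ and use $i^!j_*=0$ from (1) to force $i^*j_*C=0$.

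I expect the main obstacle to be exactly this preliminary claim: the rest is adjunction-chasing, but showing that $\varepsilon_B$ is mono (and $\eta_B$ epi) is where the fully-faithfulness axiom \ref{R2}, the gluing axiom \ref{R3}, and the exactness of $j^*$ must be combined through the triangle identities, and I will need to state that combination carefully rather than waving at it.
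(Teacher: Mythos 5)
Your proof is correct, and it supplies something the paper deliberately omits: Lemma \ref{2.6} is stated there without proof, as a list of known facts with citations to \cite{M2018, Psa2014, PV2019}, so the comparison is with those references rather than with an in-paper argument. Your items (1)--(3) are the standard bookkeeping (Yoneda chaining of the adjunctions in (R1) together with (R3) for item (1); exactness of adjoints for (2); the unit/counit criterion for fully faithfulness fed by (R2) for (3)), and your treatment of (4) reconstructs exactly the mechanism used in the literature: the preliminary claim that the counit $i_*i^!B\to B$ is always monic and the unit $B\to i_*i^*B$ always epic is the standard structural fact behind the canonical exact sequences $0\to i_*i^!B\to B\to j_*j^*B$ and $j_!j^*B\to B\to i_*i^*B\to 0$ of \cite{Psa2014, FP2004}. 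Your derivation of that claim is sound and complete: the triangle identity plus the invertible unit from (3) gives that $i^!(\varepsilon_B)$ is an isomorphism, left exactness of $i^!$ then kills $i^!K$ for $K=\Ker\varepsilon_B$; exactness of $j^*$ and $j^*i_*=0$ (from (R3)) kill $j^*K$, so $K\cong i_*K'$ by (R3), and $K'\cong i^!i_*K'\cong i^!K=0$. The two implications of (4) then follow formally from this claim together with (1) and (3), exactly as you write; note that the dual (epi) half needs only the right exactness of $i^*$, which holds unconditionally, so no hidden hypothesis enters.

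One slip of wording, which does not propagate into the argument: your opening folklore statement ``a functor possessing a right (resp.\ left) adjoint is left (resp.\ right) exact'' has the directions swapped --- a functor possessing a right adjoint \emph{is} a left adjoint and is therefore right exact, and dually. Every application you subsequently make (``$i_*$ is a right adjoint of $i^*$ and a left adjoint of $i^!$, hence exact,'' ``$i^*$ and $j_!$ occur only as left adjoints, hence right exact,'' etc.) uses the correct direction, so only that summary sentence needs correcting.
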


The following lemma is widely studied, which plays a crucial role in the sequel.

\begin{lemma}[{\!\!\cite[Example 2.7]{Psa2014}}] \label{exp:recollement}
Let $\e$ be an idempotent of an algebra $A$.
Denote by $\ol{A}$ the quotient $A/A\e A$ of $A$ and $\w{A}$ the algebra $\e A\e$.
Then we have a recollement of module categories:
\begin{align}\label{recollement}
\xymatrix@C=2cm{
  \modcat(\ol{A})
  \ar[r]^{\emb}_{\mathrm{embedding}}
& \modcat(A)
  \ar[r]^{(-)\e}_{\text{retraction}}
  \ar@/_1.5pc/[l]_{ - \otimes_A{_A\ol{A}}}
  \ar@/^1.5pc/[l]^{\Hom_A({\ol{A}_A}, -)}
& \modcat(\w{A}),
  \ar@/_1.5pc/[l]_{-\otimes_{\w{A}} \e A }
  \ar@/^1.5pc/[l]^{\Hom_{\w{A}}(\e A,-)}
}
\end{align}
where $\emb$ is an embedding functor.
\end{lemma}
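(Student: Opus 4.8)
The plan is to verify directly the three defining axioms \ref{R1}, \ref{R2}, \ref{R3} of a recollement for the six displayed functors. It is convenient to first record the basic identifications attached to the idempotent $\e$: for every $M \in \modcat(A)$ there are natural isomorphisms of right $\w{A}$-modules
\[
M\e \;\cong\; M \otimes_A A\e \;\cong\; \Hom_A(\e A, M),
\]
the last one sending $f \mapsto f(\e)$. Thus $j^* = (-)\e$ can be read interchangeably as a tensor functor or a hom functor, which is what makes the two $j$-adjunctions visible. Likewise $i_* = \emb$ is simply restriction of scalars along the canonical surjection $\pi\colon A \twoheadrightarrow \ol{A} = A/A\e A$.

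For \ref{R1} I would obtain the two $i$-adjunctions from the standard restriction/extension/coinduction triple of the ring map $\pi$: restriction $i_*$ has left adjoint the extension of scalars $-\otimes_A \ol{A} = i^*$ and right adjoint the coinduction $\Hom_A(\ol{A}, -) = i^!$. The two $j$-adjunctions are then instances of the tensor--hom adjunction for the $(\w{A}, A)$-bimodule $\e A$ and the $(A, \w{A})$-bimodule $A\e$: using $j^* M \cong \Hom_A(\e A, M)$ gives $\Hom_A(j_! N, M) \cong \Hom_{\w{A}}(N, j^* M)$, so $(j_!, j^*)$ is an adjoint pair, while using $j^* M \cong M \otimes_A A\e$ gives $\Hom_{\w{A}}(j^* M, N) \cong \Hom_A(M, j_* N)$, so $(j^*, j_*)$ is an adjoint pair.

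For \ref{R2}, the functor $i_*$ is fully faithful because any $A$-linear map between modules annihilated by $A\e A$ is automatically $\ol{A}$-linear, giving $\Hom_{\ol{A}}(X,Y) = \Hom_A(\emb X, \emb Y)$. For $j_!$ and $j_*$ I would invoke the principle that, in an adjunction, the left adjoint is fully faithful exactly when the unit is invertible and the right adjoint exactly when the counit is invertible. Both cases reduce to the key identity $\e A \otimes_A A\e \cong \e A\e = \w{A}$, which yields $j^* j_! N \cong (N \otimes_{\w{A}} \e A)\e \cong N \otimes_{\w{A}} \e A\e \cong N$ and, dually, $j^* j_* \cong 1_{\modcat(\w{A})}$; hence the unit $1 \to j^* j_!$ and the counit $j^* j_* \to 1$ are isomorphisms. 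For \ref{R3} I would identify $\Ima(i_*) = \{M \in \modcat(A) : M\cdot A\e A = 0\}$ and $\Ker(j^*) = \{M : M\e = 0\}$ and check that these coincide: if $M\e = 0$ then $M\cdot A\e A = M\e A = 0$ since $MA = M$, while conversely $M\e = M\cdot 1\cdot\e\cdot 1 \subseteq M\cdot A\e A$ because $A$ is unital.

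The main obstacle is not conceptual but organizational: one must fix the bimodule structures so that the three descriptions $M\e \cong M\otimes_A A\e \cong \Hom_A(\e A, M)$ of $j^*$ and the identity $\e A\otimes_A A\e \cong \w{A}$ are natural in $M$ and mutually compatible, ensuring that all four adjunctions together with the unit/counit isomorphisms are simultaneously coherent. Once these identifications are pinned down, each axiom becomes a short verification; this is exactly the content recorded in \cite[Example 2.7]{Psa2014}.
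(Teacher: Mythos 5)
The paper offers no proof of this lemma at all: it is imported verbatim from Psaroudakis \cite[Example 2.7]{Psa2014}, so there is no internal argument to compare yours against. Judged on its own, your verification is correct and is exactly the standard argument underlying the cited example: the three identifications $M\e \cong M\otimes_A A\e \cong \Hom_A(\e A,M)$ make the four adjunctions in \ref{R1} instances of extension/restriction/coinduction along $A\twoheadrightarrow \ol{A}$ and of tensor--hom for the bimodules $\e A$ and $A\e$; full faithfulness in \ref{R2} reduces to $\e A\otimes_A A\e \cong \e A\e$; and \ref{R3} is the elementary comparison of the annihilation conditions $M\e=0$ and $M\cdot A\e A=0$, both directions of which you check correctly (using $MA=M$ one way and unitality of $A$ the other).

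Two remarks. First, you silently (and rightly) correct a notational slip: for \emph{right} modules, as the paper's conventions stipulate, the coinduction adjoint to $j^*=(-)\e$ is $\Hom_{\w{A}}(A\e,-)$ built on the $(A,\w{A})$-bimodule $A\e$, whereas the diagram in the lemma writes $\Hom_{\w{A}}(\e A,-)$, which is the left-module form; your version is the one for which $\Hom_{\w{A}}(M\otimes_A A\e, N)\cong\Hom_A(M,\Hom_{\w{A}}(A\e,N))$ actually holds. Second, the one genuinely delicate point is the inference from $j^*j_!\cong 1$ and $j^*j_*\cong 1$ to invertibility of the unit and counit: an \emph{abstract} natural isomorphism does not formally force this, and your closing paragraph correctly flags the needed compatibility. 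The gap closes immediately here because the isomorphism you construct \emph{is} the unit: the unit of $(j_!,j^*)$ sends $n\mapsto n\otimes\e$, which under $(N\otimes_{\w{A}}\e A)\e \cong N\otimes_{\w{A}}\e A\e \cong N$ becomes the identity, and dually the counit of $(j^*,j_*)$ is the evaluation isomorphism $\Hom_{\w{A}}(A\e,N)\e\cong\Hom_{\w{A}}(\e A\e,N)\cong N$; stating this explicitly would make the argument watertight. Finally, note that the paper's definition of recollement demands only \ref{R1}--\ref{R3}, with no separate exactness axioms (exactness of $i_*$ and $j^*$ is automatic since each has adjoints on both sides), so your checklist is complete as it stands.
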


\section{Functors preserving exceptionality}

\subsection{Projective split extensions and functor $-\otimes_A R_R$}

A split extension $\xi:R\to A$ of $A$ by the nilpotent $(A,A)$-bimodule $Q$
is said to be {\defines projective} if $R = {_AR}$, as a left $A$-module, is projective.
In this section, we show that $-\otimes_A R$ preserves the exceptionality of exceptional sequence, see Theorem \ref{Th2.1}.

\begin{lemma} \label{Lem3.1}
Let $\calA$, $\calB$ be two Abelian categories with enough projective objects,
and $F: \calA \to \calB$, $G: \calB \to \calA$ be two functors.
If $(F,G)$ is an adjoint pair and $F$ is an exact functor preserving projective objects, then
\begin{center}
  $\Ext^{n}_{\calB}(F(M), F(N)) \cong \Ext^{n}_{\calA}(M, GF(N))$
\end{center}
holds for each objects $M$, $N$ in $\calA$.
\end{lemma}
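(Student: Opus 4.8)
The plan is to compute both sides of the claimed isomorphism from a single projective resolution of $M$ and then to identify the two resulting $\Hom$-complexes via the adjunction. First I would fix a projective resolution
$$\cdots \To{d_2} P_1 \To{d_1} P_0 \To{d_0} M \to 0$$
in $\calA$, which exists because $\calA$ has enough projective objects. Since $F$ is exact, applying $F$ preserves the exactness of this sequence, and since $F$ preserves projective objects, each $F(P_i)$ is projective in $\calB$. Hence
$$\cdots \To{F(d_2)} F(P_1) \To{F(d_1)} F(P_0) \To{F(d_0)} F(M) \to 0$$
is a genuine projective resolution of $F(M)$ in $\calB$. Consequently $\Ext^{n}_{\calB}(F(M),F(N))$ is the $n$-th cohomology of the cochain complex $\Hom_{\calB}(F(P_\bullet),F(N))$, while $\Ext^{n}_{\calA}(M,GF(N))$ is the $n$-th cohomology of $\Hom_{\calA}(P_\bullet,GF(N))$.

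Next I would invoke the adjunction to compare these two complexes term by term. Specializing the adjunction isomorphism $\Hom_{\calB}(F(X),Y)\cong\Hom_{\calA}(X,G(Y))$ at the fixed object $Y=F(N)\in\calB$ gives, for each $X\in\calA$, an isomorphism
$$\eta_X:\ \Hom_{\calB}(F(X),F(N)) \To{\sim} \Hom_{\calA}(X,GF(N))$$
which is natural in $X$. Applying this with $X=P_i$ for all $i$, naturality in the first variable (with respect to each differential $d_i\colon P_i\to P_{i-1}$) says precisely that the squares relating $\eta_{P_{i-1}}$ and $\eta_{P_i}$ through the maps induced by $F(d_i)$ and by $d_i$ commute. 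Thus $\{\eta_{P_i}\}_i$ is an isomorphism of cochain complexes $\Hom_{\calB}(F(P_\bullet),F(N))\cong\Hom_{\calA}(P_\bullet,GF(N))$, and passing to cohomology in each degree yields $\Ext^{n}_{\calB}(F(M),F(N))\cong\Ext^{n}_{\calA}(M,GF(N))$.

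The step that deserves the most care — and where both hypotheses are genuinely needed — is the very first one: I must use exactness of $F$ to ensure that $F$ carries the resolution to an exact complex, and preservation of projectives to ensure that the terms $F(P_i)$ are projective, so that $F(P_\bullet)\to F(M)$ really is a projective resolution and legitimately computes $\Ext_{\calB}$. Once this is established, the remainder is the formal transport of the adjunction isomorphism through the $\Hom$-complexes, where the only verification is compatibility with the differentials, which is immediate from naturality; no further computation with explicit cocycles is required.
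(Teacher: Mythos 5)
Your proposal is correct and follows essentially the same route as the paper's own proof: apply $F$ to a projective resolution of $M$, use exactness and preservation of projectives to see that the image is a projective resolution of $F(M)$, and then transport the adjunction isomorphism $\Hom_{\calB}(F(P_i),F(N))\cong\Hom_{\calA}(P_i,GF(N))$ through the $\Hom$-complexes via naturality before taking cohomology. If anything, you are slightly more explicit than the paper about why naturality in the first variable makes the squares commute, which is a welcome precision rather than a deviation.
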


\begin{proof}
Assume that the projective resolution of ${M}$ is
\[ \cdots \To{} P_1 \To{} P_0 \To{} M \To{} 0.  \eqno{\text{$\spadesuit$}} \]
Then we obtain that
\[ \cdots \To{} F(P_1) \To{} F(P_0) \To{} F(M) \To{} 0 \eqno{\text{$\clubsuit$}} \]
is a projective resolution of $F(M)$, since $F$ is an exact functor and preserves projective modules.
By applying the functor $\Hom_{\calB}(-, GF(N))$ to sequence $\spadesuit$ and
applying the functor $\Hom_{\calB}(-, F(N))$ to sequence $\clubsuit$,
we obtain that the following diagram
$$
\xymatrix{
  0 \ar[r]
& \Hom_{\calB}(F(M), F(N))   \ar[r] \ar[d]^{\cong}
& \Hom_{\calB}(F(P_0), F(N)) \ar[r] \ar[d]^{\cong}
& \Hom_{\calB}(F(P_1), F(N)) \ar[d]^-{\cong} \ar[r]
& \cdots \\
  0 \ar[r]
& \Hom_{\calA}(M, GF(N))   \ar[r]
& \Hom_{\calA}(P_0, GF(N)) \ar[r]
& \Hom_{\calA}(P_1, GF(N))\ar[r]&\cdots
}
$$
commutes since $(F,G)$ is an adjoint pair.
Thus, $\Ext^{n}_{\calB}(F(M), F(N))\cong \Ext^{n}_{\calA}(M, GF(N))$
holds for all $n\in\NN$.
\end{proof}

\begin{corollary} \rm\label{Cor3.2}
Let $\xi:R\to A$ be a projective split extension of a finite-dimensional algebra $A$ by the nilpotent $(A,A)$-bimodule $Q$.
Then
\begin{center}
   $\Ext^{n}_{R}(M\otimes_{A}R , N\otimes_{A}R)\cong \Ext^{n}_{A}(M_A, \Hom_R({_AR_R}, N\otimes_{A}R))$
\end{center}
holds for all $n\in\NN$.
\end{corollary}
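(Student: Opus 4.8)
The plan is to read off the claimed isomorphism as a direct instance of Lemma \ref{Lem3.1}, applied to the tensor functor $F := - \otimes_A R \colon \modcat(A) \to \modcat(R)$ together with its right adjoint. Thus the entire argument reduces to identifying the correct adjoint pair and checking that the three hypotheses of Lemma \ref{Lem3.1} hold for this $F$.

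First I would fix the data. Take $\calA = \modcat(A)$, $\calB = \modcat(R)$, $F = - \otimes_A R$, and $G = \Hom_R({_AR_R}, -)$. Both are module categories over finite-dimensional $\KK$-algebras, so they have enough projective objects and the setting of Lemma \ref{Lem3.1} applies. The pair $(F,G)$ is adjoint by the standard tensor--hom adjunction, viewing ${_AR_R}$ as an $(A,R)$-bimodule (left $A$-action via the section of $\xi$, right $R$-action by multiplication): for a right $A$-module $M$ and a right $R$-module $X$ one has a natural isomorphism $\Hom_R(M \otimes_A R, X) \cong \Hom_A(M, \Hom_R({_AR_R}, X))$, so $G$ indeed takes values in $\modcat(A)$ and is right adjoint to $F$.

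Next I would verify exactness and preservation of projectives. Since $\xi$ is a \emph{projective} split extension, $R = {_AR}$ is projective, hence flat, as a left $A$-module, so $F = - \otimes_A R$ is exact. For preservation of projectives it suffices to observe that $A \otimes_A R \cong R_R$ is projective as a right $R$-module; as $- \otimes_A R$ commutes with finite direct sums and with passage to direct summands, every projective right $A$-module is sent to a projective right $R$-module. With these two properties and the adjunction in hand, all hypotheses of Lemma \ref{Lem3.1} are satisfied, and it yields, for all $n \in \NN$,
$$\Ext^{n}_{R}(M \otimes_A R,\, N \otimes_A R) \cong \Ext^{n}_{A}\!\big(M,\, \Hom_R({_AR_R},\, N \otimes_A R)\big),$$
which is precisely the assertion, since $GF(N) = \Hom_R({_AR_R}, N \otimes_A R)$.

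I do not anticipate a genuine obstacle: the statement is essentially a specialization of Lemma \ref{Lem3.1}. The only point demanding care is the bookkeeping of one-sided module structures on $R$ — one must use ${_AR_R}$ with its left $A$- and right $R$-actions so that $F$ really lands in $\modcat(R)$ and $G$ in $\modcat(A)$, and, crucially, it is the \emph{left} $A$-module structure of $R$ (projective, hence flat) that controls the exactness of $- \otimes_A R$, exactly the place where the projectivity hypothesis of the extension is consumed.
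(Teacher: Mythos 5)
Your proposal is correct and follows essentially the same route as the paper: both verify the hypotheses of Lemma \ref{Lem3.1} for the adjoint pair $(-\otimes_A R,\ \Hom_R({_AR_R},-))$, deduce exactness of $-\otimes_A R$ from the projectivity of ${_AR}$, and then invoke that lemma. The only cosmetic difference is in checking preservation of projectives --- you argue via $A\otimes_A R \cong R_R$ together with additivity and passage to direct summands, while the paper computes $eA\otimes_A R \cong eR$ explicitly from a decomposition ${_AR} \cong A \oplus \bigoplus_{i} Ae_i$ --- but the substance is identical.
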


\begin{proof}
First of all, since ${_A}R$ is projective, then there exists a family of idempotents $(e_i)_{1\=< i \=< t}$
such that \[{_AR} \cong A \oplus {_AQ} \cong A \oplus \bigoplus\limits_{i=1}^t A e_i,\]
where $Q \cong \bigoplus\limits_{i=1}^t A e_i$.
Then, for each projective right $A$-module $eA$, the following formula
\begin{align*}
  eA \otimes_A R
& \cong  (eA \otimes_A A) \oplus \Big(eA \otimes_A \Big(\bigoplus\limits_{i=1}^t A e_i\Big) \Big) \\
& \cong eA \oplus \Big(\bigoplus\limits_{i=1}^t (eA \otimes_A Ae_i)\Big) \\
& \cong eA \oplus \bigoplus\limits_{i=1}^t eAe_i \\
& = e\Big(A\oplus \bigoplus\limits_{i=1}^t Ae_i\Big) \\
& \cong e R
\end{align*}
admits that $-\otimes_A R$ preserves projective modules.
Note that $(-\otimes_{A}R, \Hom_{R}(R, -))$ is an adjoint pair,
and since $R={_AR}$ is projective we have $-\otimes_{A}R$ is an exact.
Then, by Lemma \ref{Lem3.1}, we obtain that
\begin{center}
$ \Ext^{n}_{R}(M\otimes_{A}R , N\otimes_{A}R)
  \cong \Ext^{n}_{A}(M, \Hom_{R}({_AR_R}, N\otimes_{A}{_AR_R})) $
\end{center}
holds for all $n\in \NN$.
\end{proof}

Example \ref{Ex3.3} given in Section \ref{Sect:Examp} illustrates that the condition ``${_A}R$ is projective'' is indispensable, yet it is not uncommon. We provide an instance in Example \ref{Ex3.4} for it.

\begin{lemma} \label{Lem3.5}
Let $\xi:R\to A$ be a split extension of $A$ by the nilpotent bimodule $Q$
and $M_A$ be an exceptional module in $\modcat(A)$. If
\begin{enumerate}[label={\rm(\arabic*)} ]
  \item $\xi$ is projective; \label{Lem3.5(1)}
  \item $M_A$ satisfies $\Hom_{A}(M, M\otimes_{A}Q)=0$; \label{Lem3.5(2)}
  \item and $M_A$ satisfies $\Ext_{A}^{n}(M, M\otimes_{A}Q) =0$, \label{Lem3.5(3)}
\end{enumerate}
then $M\otimes_{A}R$, as a right $R$-module, is exceptional.
\end{lemma}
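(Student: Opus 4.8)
The plan is to verify the two defining conditions \ref{E1} and \ref{E2} directly for the right $R$-module $M\otimes_A R$, pushing everything down to homological data over $A$ via the adjunction $(-\otimes_A R, \Hom_R({_AR_R},-))$ and Corollary \ref{Cor3.2} (whose projectivity hypothesis is exactly our assumption \ref{Lem3.5(1)}). The whole argument rests on a single structural identification: that applying $\Hom_R({_AR_R},-)$ to the induced module $M\otimes_A R$ recovers $M$ together with its ``$Q$-twist'' $M\otimes_A Q$, after which the exceptionality conditions split off cleanly.

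First I would establish the key isomorphism of right $A$-modules
$$\Hom_R({_AR_R}, M\otimes_A R) \cong M \oplus (M\otimes_A Q).$$
Since $R$ is free of rank one as a right $R$-module, evaluation at $1$ identifies $\Hom_R({_AR_R}, M\otimes_A R)$ with $M\otimes_A R$ as abelian groups, and the left $A$-action on $R$ transports, under this identification, to the right $A$-action obtained by restricting the right $R$-module $M\otimes_A R$ along the splitting of $\xi$. By Remark \ref{rmk-2.5} we have a bimodule decomposition ${_AR_A} \cong {_AA_A}\oplus {_AQ_A}$, so this restriction is $M\otimes_A(A\oplus Q) \cong M\oplus (M\otimes_A Q)$, as claimed. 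This is the only step that genuinely exploits the split-by-nilpotent structure, and I expect the careful bookkeeping of the several $A$- and $R$-module structures carried by $R$ to be the main technical obstacle; everything after it is formal.

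Next, by Corollary \ref{Cor3.2} applied with $N=M$, combined with the identification above,
$$\Ext^{n}_{R}(M\otimes_A R, M\otimes_A R) \cong \Ext^{n}_{A}(M,\, M\oplus(M\otimes_A Q)) \cong \Ext^{n}_{A}(M,M)\oplus \Ext^{n}_{A}(M, M\otimes_A Q)$$
for every $n$. For $n\geqslant 1$ the first summand vanishes because $M$ is exceptional (condition \ref{E2}) and the second vanishes by hypothesis \ref{Lem3.5(3)}, which yields condition \ref{E2} for $M\otimes_A R$. The $n=0$ case is just the adjunction isomorphism $\End_{R}(M\otimes_A R)\cong \Hom_{A}(M, \Hom_R({_AR_R}, M\otimes_A R))$, and the same splitting gives $\End_{R}(M\otimes_A R)\cong \End_{A}(M)\oplus \Hom_{A}(M, M\otimes_A Q)$, whose first summand is $\KK$ since $M$ is exceptional (condition \ref{E1}) and whose second summand vanishes by hypothesis \ref{Lem3.5(2)}. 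Hence $\End_{R}(M\otimes_A R)\cong \KK$, establishing condition \ref{E1}.

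Finally, since $\End_{R}(M\otimes_A R)\cong \KK$ is in particular local, $M\otimes_A R$ is indecomposable, so conditions \ref{E1} and \ref{E2} together show that $M\otimes_A R$ is an exceptional right $R$-module, completing the proof.
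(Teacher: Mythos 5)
Your proposal is correct and takes essentially the same route as the paper: both verify \ref{E1} via the adjunction $(-\otimes_A R,\,\Hom_R({_AR_R},-))$ together with the bimodule splitting ${_AR_A}\cong A\oplus{_AQ_A}$ from Remark \ref{rmk-2.5}, and verify \ref{E2} via Corollary \ref{Cor3.2}, with hypotheses \ref{Lem3.5(2)} and \ref{Lem3.5(3)} killing the $\Hom_A(M,M\otimes_A Q)$ and $\Ext^n_A(M,M\otimes_A Q)$ summands. Your only departures are cosmetic: you isolate the identification $\Hom_R({_AR_R},M\otimes_A R)\cong M\oplus(M\otimes_A Q)$ as a standalone step rather than unwinding it inside the two chains of isomorphisms, and you explicitly note that $\End_R(M\otimes_A R)\cong\KK$ forces indecomposability, a point the paper leaves implicit.
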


\begin{proof}
First of all, we have the following isomorphism
\begin{equation*}
\begin{split}
\Hom_{R}(M\otimes_{A}R, M\otimes_{A}R)
&\cong \Hom_{A}(M_{A}, \Hom_{R}(R, M\otimes_{A}R)) \\
&\cong \Hom_{A}(M_{A}, M\otimes_{A}R)  \\
&\cong \Hom_{A}(M_{A}, M\otimes_{A}(A\oplus Q))  \\
&\cong \Hom_{A}(M_{A}, M_{A})\oplus \Hom_{A}(M_{A}, M\otimes_{A}Q).
\end{split}
\end{equation*}
Since $M_A$ is an exceptional module in $\modcat(A)$, we have $\Hom_{A}(M_{A}, M_{A}) = \KK$ by \ref{E1}.
Moreover, by the condition \ref{Lem3.5(2)} of this lemma, $\Hom_{A}(M_{A}, M\otimes_{A}Q) = 0$. Then we obtain
\[ \Hom_{R}(M\otimes_{A}R, M\otimes_{A}R) = \KK, \]
i.e., the condition \ref{E1} holds for $M \otimes_A R$.

Second, by the condition \ref{Lem3.5(1)} of this lemma and Corollary~\ref{Cor3.2}, we have the following isomorphism
\begin{equation*}
\begin{split}
\Ext^{n}_{R}(M\otimes_{A}R, M\otimes_{A}R)
&\cong \Ext^{n}_{A}(M_{A}, \Hom_{R}(R, M\otimes_{A}R)) \\
&\cong \Ext^{n}_{A}(M_{A}, M\otimes_{A}R)  \\
&\cong \Ext^{n}_{A}(M_{A}, M\otimes_{A}(A\oplus Q))  \\
&\cong \Ext^{n}_{A}(M, M)\oplus \Ext^{n}_{A}(M_{A}, M\otimes_{A}Q),
\end{split}
\end{equation*}
By the condition \ref{Lem3.5(3)} of this lemma and \ref{E2}, we have
 \[\Ext^{n}_{R}(M\otimes_{A}R, M\otimes_{A}R) = 0, \]
i.e., the condition \ref{E2} holds for $M \otimes_A R$.
Thus, $M\otimes_{A}R$ is an exceptional module in $\modcat(R)$.
\end{proof}

\begin{lemma} \label{Lem3.6}
Let $\xi:R\to A$ be a split extension of $A$ by the nilpotent bimodule $Q$ and
$M_1$ and $M_2$ be two modules in $\modcat(A)$.  If
\begin{enumerate}[label={\rm(\arabic*)}]
  \item $(M_1, M_2)$ is an exceptional pair in $\modcat(A)$; \label{Lem3.6(1)}
  \item $\xi$ is projective; \label{Lem3.6(2)}
  \item $\Hom_{A}(M_k, M_k\otimes_{A}Q)=0$ $(k\in\{1,2\})$ and $\Hom_{A}(M_2, M_1\otimes_{A}Q)=0$ hold; \label{Lem3.6(3)}
  \item $\Ext^{n}_{A}(M_k, M_k\otimes_{A}Q) =0$ $(k\in\{1,2\})$ and $\Ext_{A}^{n}(M_2, M_1\otimes_{A}Q) =0$ hold for all $n\in \NN$,\label{Lem3.6(4)}
\end{enumerate}
Then $(M_1\otimes_{A}R, M_2\otimes_{A}R)$ is an exceptional pair in $\modcat(R)$.
\end{lemma}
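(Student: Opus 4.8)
The plan is to deduce the statement from the single-module case in Lemma~\ref{Lem3.5} together with the adjunction and the $\Ext$-isomorphism already established in Corollary~\ref{Cor3.2}. By the definition of an exceptional pair, it suffices to verify three things: that $M_1\otimes_A R$ and $M_2\otimes_A R$ are each exceptional modules in $\modcat(R)$, and that the pair satisfies \ref{E1'} and \ref{E2'}, i.e. $\Hom_R(M_2\otimes_A R, M_1\otimes_A R)=0$ and $\Ext^{n}_R(M_2\otimes_A R, M_1\otimes_A R)=0$ for every positive integer $n$.

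First I would settle the exceptionality of the two individual modules. Since $(M_1,M_2)$ is an exceptional pair, each $M_k$ is by definition an exceptional module in $\modcat(A)$; together with the projectivity hypothesis \ref{Lem3.6(2)} and the diagonal instances of \ref{Lem3.6(3)} and \ref{Lem3.6(4)} (namely $\Hom_A(M_k, M_k\otimes_A Q)=0$ and $\Ext^{n}_A(M_k, M_k\otimes_A Q)=0$), Lemma~\ref{Lem3.5} applies verbatim to each $M_k$ and yields that $M_k\otimes_A R$ is exceptional in $\modcat(R)$ for $k\in\{1,2\}$.

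For the $\Hom$-vanishing in \ref{E1'}, the idea is to transport the computation back to $\modcat(A)$ through the adjunction $(-\otimes_A R,\Hom_R(R,-))$ and then to use the bimodule splitting ${_AR_A}\cong A\oplus Q$ from Remark~\ref{rmk-2.5}, giving
\begin{align*}
\Hom_R(M_2\otimes_A R, M_1\otimes_A R)
&\cong \Hom_A(M_2,\Hom_R(R, M_1\otimes_A R))\\
&\cong \Hom_A(M_2, M_1\otimes_A R)\\
&\cong \Hom_A(M_2, M_1)\oplus \Hom_A(M_2, M_1\otimes_A Q).
\end{align*}
Here the first summand vanishes because $(M_1,M_2)$ satisfies \ref{E1'} in $\modcat(A)$, and the second vanishes by the off-diagonal instance $\Hom_A(M_2, M_1\otimes_A Q)=0$ of hypothesis \ref{Lem3.6(3)}.

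The $\Ext$-vanishing in \ref{E2'} runs along exactly the same lines, except that the passage to $\modcat(A)$ now invokes Corollary~\ref{Cor3.2} in place of the plain adjunction; this is precisely the step that consumes the projectivity of ${_AR}$. One obtains
\begin{align*}
\Ext^{n}_R(M_2\otimes_A R, M_1\otimes_A R)
&\cong \Ext^{n}_A(M_2,\Hom_R(R, M_1\otimes_A R))\\
&\cong \Ext^{n}_A(M_2, M_1\otimes_A R)\\
&\cong \Ext^{n}_A(M_2, M_1)\oplus \Ext^{n}_A(M_2, M_1\otimes_A Q),
\end{align*}
and both summands vanish, the first by \ref{E2'} for $(M_1,M_2)$ and the second by the off-diagonal instance of \ref{Lem3.6(4)}. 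I do not anticipate a genuine obstacle here beyond bookkeeping; the one point to keep in view is that Corollary~\ref{Cor3.2} (hence hypothesis \ref{Lem3.6(2)}) is what makes the $\Ext$-adjunction isomorphism legitimate, so the projectivity of ${_AR}$ cannot be dropped from the argument.
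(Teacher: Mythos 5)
Your proposal is correct and follows essentially the same route as the paper's own proof: Lemma~\ref{Lem3.5} applied to each $M_k$ via the diagonal instances of the hypotheses, then the adjunction together with the splitting ${_AR_A}\cong A\oplus{_AQ_A}$ for \ref{E1'}, and Corollary~\ref{Cor3.2} for \ref{E2'}. Your closing remark correctly identifies where the projectivity of ${_AR}$ is consumed, which matches the paper's use of Corollary~\ref{Cor3.2} exactly.
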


\begin{proof}
First of all, by Lemma \ref{Lem3.5}, the conditions \ref{Lem3.6(2)},
\ref{Lem3.6(3)} ($\Hom_{A}(M_k, M_k\otimes_{A}Q)=0$ $(k\in\{1,2\})$) and
\ref{Lem3.6(4)} ($\Ext^{n}_{A}(M_k, M_k\otimes_{A}Q) =0$ $(k\in\{1,2\})$)
admit that $M_1\otimes_{A}Q$ and $M_2\otimes_{A}Q$ are exceptional modules.

Second, by using adjoint isomorphism, we have
\begin{equation*}
\begin{split}
\Hom_{R}(M_2\otimes_{A}R, M_1\otimes_{A}R)
&\cong \Hom_{A}(M_2, \Hom_{R}(R, M_1\otimes_{A}R))\\
&\cong \Hom_{A}(M_2, M_1\otimes_{A}R)\\
&  =   \Hom_{A}(M_2, M_1\otimes_{A}(A\oplus Q))\\
&\cong \Hom_{A}(M_2, M_1)\oplus \Hom_{A}(M_2, M_1\otimes_{A}Q)  \\
\end{split}
\end{equation*}
By the condition \ref{Lem3.6(1)} of this lemma and the condition \ref{E1'},
we have $\Hom_{A}(M_2, M_1) = 0$. By the condition \ref{Lem3.6(3)},
we have $\Hom_{A}(M_2, M_1\otimes_{A}Q) = 0$.
Thus, $\Hom_{R}(M_2\otimes_{A}R, M_1\otimes_{A}R) = 0$, i.e.,
it follows that \ref{E1'} holds for $(M_1\otimes_{A}R, M_2\otimes_{A}R)$.

Third, by Corollary \ref{Cor3.2}, we have
\begin{equation*}
\begin{split}
\Ext^{n}_{R}(M_2\otimes_{A}R, M_1\otimes_{A}R)
&\cong \Ext^{n}_{A}(M_2, \Hom_{R}(R, M_1\otimes_{A}R)) \\
&\cong \Ext^{n}_{A}(M_2, M_1\otimes_{A}R) \\
&\cong \Ext^{n}_{A}(M_2, M_1\otimes_{A}(A\oplus Q)) \\
&\cong \Ext^{n}_{A}(M_2, M_1)\oplus \Ext^{n}_{A}(M_2, M_1\otimes_{A}Q)  \\
\end{split}
\end{equation*}
By \ref{E2'} and the condition \ref{Lem3.6(4)} of this lemma, we have $\Ext^{n}_{R}(M_2\otimes_{A}R, M_1\otimes_{A}R) = 0$,
i.e., \ref{E2'} holds for $(M_1\otimes_{A}R, M_2\otimes_{A}R)$.
The above arguments follows that $(M_1\otimes_{A}R, M_2\otimes_{A}R)$ is an exceptional pair in $\modcat(R)$.
\end{proof}

Now, we show the first main result of this paper.

\begin{theorem} \label{Th2.1} 
Let $\xi:R\to A$ be a split extension of $A$ by the nilpotent bimodule $Q$,
and $M_1$, $M_2$, $\ldots$, $M_r$ be right $A$-modules. If:
\begin{enumerate}[label={\rm(\arabic*)}]
  \item $(M_1, M_2, \ldots, M_r)$ is an exceptional sequence; \label{Th2.1(1)}
  \item $\xi$ is projective; \label{Th2.1(2)}
  \item $\Hom_{A}(M_k, M_k\otimes_{A}Q) =0$ $(1\=< k \=< r)$
     and $\Hom_{A}(M_j, M_i\otimes_{A}Q)=0$ $(1\=< i< j\=< r)$ hold; \label{Th2.1(3)}
  \item $\Ext^{n}_{A}(M_k, M_k\otimes_{A}Q) =0$ $(1\=< k \=< r)$
     and $\Ext^{n}_{A}(M_j, M_i\otimes_{A}Q)=0$ $(1\=< i< j\=< r)$ hold, \label{Th2.1(4)}
\end{enumerate}
then $(M_1\otimes_{A}R, M_2\otimes_{A}R, \ldots, M_r\otimes_{A}R)$ is an exceptional sequence in $\modcat(R)$.
In addition, if $(M_1, M_2, \ldots, M_r)$ is complete, then so is $(M_1\otimes_{A}R, M_2\otimes_{A}R, \ldots, M_r\otimes_{A}R)$.
\end{theorem}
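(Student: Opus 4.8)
The plan is to reduce the statement to the two preceding lemmas, which already settle the single-object and the pairwise cases, and then to treat completeness separately by a rank count.

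First I would verify the exceptionality of each individual object $M_k\otimes_A R$. For every $k$ with $1\=< k\=< r$, condition \ref{Th2.1(1)} guarantees that $M_k$ is exceptional in $\modcat(A)$, while condition \ref{Th2.1(2)}, the diagonal vanishing $\Hom_A(M_k,M_k\otimes_A Q)=0$ from \ref{Th2.1(3)}, and the diagonal vanishing $\Ext^n_A(M_k,M_k\otimes_A Q)=0$ from \ref{Th2.1(4)} supply exactly the three hypotheses of Lemma \ref{Lem3.5}. Applying that lemma yields that $M_k\otimes_A R$ is exceptional in $\modcat(R)$.

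Next I would establish the pairwise condition. Fix indices $1\=< i<j\=< r$. Since $(M_1,\ldots,M_r)$ is an exceptional sequence, the pair $(M_i,M_j)$ is an exceptional pair in $\modcat(A)$; together with condition \ref{Th2.1(2)}, the off-diagonal vanishing $\Hom_A(M_j,M_i\otimes_A Q)=0$ together with the diagonal ones from \ref{Th2.1(3)}, and the corresponding $\Ext$-vanishing from \ref{Th2.1(4)}, these are precisely the four hypotheses of Lemma \ref{Lem3.6}. That lemma then gives that $(M_i\otimes_A R,\,M_j\otimes_A R)$ is an exceptional pair in $\modcat(R)$. As this holds for every $i<j$, the sequence $(M_1\otimes_A R,\ldots,M_r\otimes_A R)$ satisfies all the defining conditions of an exceptional sequence.

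Finally, for the completeness claim, I would argue that the rank of the Grothendieck group is unchanged under a split-by-nilpotent extension. Because $Q=\Ker(\xi)\subseteq\rad R$ and $R/Q\cong A$, we have $\rad(R/Q)=\rad R/Q$, hence $R/\rad R\cong (R/Q)/\rad(R/Q)\cong A/\rad A$. Over the algebraically closed field $\KK$ this means $A$ and $R$ have the same number of isoclasses of simple modules, so $r_A=r_R$. Therefore if $r=r_A$ then $r=r_R$, and the induced sequence of length $r$ is complete. The bulk of the argument is bookkeeping, matching the hypotheses of the theorem to those of Lemmas \ref{Lem3.5} and \ref{Lem3.6}; I expect the only genuinely separate point to be the completeness statement, whose crux is the identity $r_A=r_R$, and this is exactly where one must use the containment $Q\subseteq\rad R$ rather than merely the splitting of $\xi$.
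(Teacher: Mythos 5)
Your proof of the main assertion is correct and follows exactly the paper's route: Lemma \ref{Lem3.5} applied to each $M_k$ (matching condition \ref{Th2.1(2)} and the diagonal vanishings in \ref{Th2.1(3)} and \ref{Th2.1(4)} to its three hypotheses), and Lemma \ref{Lem3.6} applied to each pair $(M_i,M_j)$ with $i<j$ (matching the off-diagonal vanishings to its four hypotheses). Where you go beyond the paper is the completeness clause: the theorem asserts it, but the paper's proof never addresses it, whereas you prove it via the identity $r_A=r_R$. Your argument there is sound: since $Q=\Ker(\xi)\subseteq\rad R$ is an ideal, one has $\rad(R/Q)=\rad R/Q$, hence $R/\rad R\cong (R/Q)/\rad(R/Q)\cong A/\rad A$, and over the algebraically closed field $\KK$ equal semisimple quotients force $A$ and $R$ to have the same number of isoclasses of simple modules, i.e.\ equal ranks of the Grothendieck groups; you are also right that this is precisely where $Q\subseteq\rad R$, and not merely the splitting of $\xi$, is used. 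So your write-up is not just equivalent to the paper's proof --- it supplies the one step that the statement of Theorem \ref{Th2.1} claims but the paper's own proof omits. (Incidentally, the paper's first sentence of the proof says the modules $M_k\otimes_A Q$ are exceptional; this is a typo for $M_k\otimes_A R$, which your version states correctly.)
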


\begin{proof}
All modules $M_k\otimes_{A}Q$ ($1\=< k \=< r$) are exceptional by Lemma \ref{Lem3.5} and the conditions \ref{Th2.1(2)}, \ref{Th2.1(3)} and \ref{Th2.1(4)} of this theorem.
Moreover, for each $1\=< i< j\=< r$, \ref{E1'} and \ref{E2'} hold for $(M_i\otimes_{A}R, M_j\otimes_{A}R)$ by Lemma \ref{Lem3.6}.
Thus, $(M_1\otimes_{A}R, M_2\otimes_{A}R, \ldots, M_r\otimes_{A}R)$ is an exceptional sequence.
\end{proof}


\subsection{Functors $i_*$ and $j_!$}

%

\begin{lemma}[{Ma--Xie--Zhao \cite{MXZ2022}}] \label{Lem3.14}
Let $\calR(\alg',\alg,\alg'')$ be a recollement as shown in (\ref{recoll diagram}).
If $i^*$ and $i^!$ are exact functors, then $i_*$ and $j_!$ are exact functors preserving projective modules.
\end{lemma}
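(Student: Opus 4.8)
The plan is to separate the two assertions—exactness and preservation of projectives—and to notice that much is already free from Lemma \ref{2.6}. I would open with the standard categorical fact that if $(F,G)$ is an adjoint pair with right adjoint $G$ exact, then $F$ preserves projective objects: for projective $P$ the functor $\Hom(F(P),-)\cong\Hom(P,G(-))$ is exact, so $F(P)$ is projective. Since $i_*$ and $j^*$ are exact by Lemma \ref{2.6}(2), the adjoint pair $(j_!,j^*)$ from \ref{R1} immediately gives that $j_!$ preserves projectives, with no extra hypothesis. For $i_*$, the relevant pair is $(i_*,i^!)$ from \ref{R1}, and here the assumption that $i^!$ is exact is exactly what is needed to conclude that $i_*$ preserves projectives. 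The exactness of $i_*$ itself is already recorded in Lemma \ref{2.6}(2).

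It then remains to prove that $j_!$ is exact. By Lemma \ref{2.6}(2) the functor $j_!$ is right exact, so the only thing to check is that it preserves monomorphisms. I would start from a monomorphism $C'\hookrightarrow C$ in $\mathcal{C}$, set $K:=\Ker\big(j_!(C')\to j_!(C)\big)$, and aim to show $K=0$. The natural first move is to apply the exact functor $j^*$: using the isomorphism $j^* j_!\cong 1_{\mathcal{C}}$ from Lemma \ref{2.6}(3), the left-exact sequence $0\to K\to j_!(C')\to j_!(C)$ becomes $0\to j^*(K)\to C'\to C$, whence $j^*(K)=0$ because $C'\to C$ is a monomorphism.

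Now the recollement axiom \ref{R3}, $\mathrm{Im}(i_*)=\Ker(j^*)$, lets me conclude $K\in\mathrm{Im}(i_*)$, so $K\cong i_*(L)$ for some object $L$; applying $i^!$ and using the isomorphism $1\to i^! i_*$ from Lemma \ref{2.6}(3) identifies $L\cong i^!(K)$ and hence $K\cong i_* i^!(K)$. To kill this, I would apply the left-exact functor $i^!$ to the monomorphism $K\hookrightarrow j_!(C')$, obtaining a monomorphism $i^!(K)\hookrightarrow i^! j_!(C')$. At this point the hypothesis that $i^*$ is exact enters through Lemma \ref{2.6}(4), which yields $i^! j_!=0$; therefore $i^! j_!(C')=0$, forcing $i^!(K)=0$ and hence $K\cong i_* i^!(K)=0$. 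This shows $j_!(C')\to j_!(C)$ is a monomorphism, and combined with right exactness it gives the exactness of $j_!$.

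The step I would be most careful about is the identification $K\cong i_* i^!(K)$: passing from ``$j^*(K)=0$'' to ``$K$ is reconstructed by $i_* i^!$'' is not automatic, and requires both the essential-image description \ref{R3} and the unit isomorphism $1\to i^! i_*$ restricted to $\mathrm{Im}(i_*)$. A secondary point worth tracking is the asymmetric role of the two hypotheses: $i^!$ exact is what makes $i_*$ preserve projectives, while $i^*$ exact is what delivers $i^! j_!=0$ and hence the exactness of $j_!$, so neither hypothesis alone suffices for both conclusions.
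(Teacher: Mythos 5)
Your proof is correct, and it takes a genuinely different route from the paper's: the paper's proof of this lemma is purely a chain of citations to \cite{MXZ2022} ($i_*$ is exact by Proposition 2.2(2) there, preserves projectives by Proposition 2.6(2) when $i^!$ is exact; $j_!$ preserves projectives by Proposition 2.5(2), invoking that $\modcat(\alg'')$ has enough projectives, and is exact by Lemma 2.4(1) when $i^*$ is exact), whereas you derive everything from the axioms \ref{R1}--\ref{R3} and Lemma \ref{2.6} alone. Your two building blocks both check out: the adjunction trick $\Hom(F(P),-)\cong\Hom(P,G(-))$ settles both preservation-of-projectives claims, and in fact yields the slightly sharper observation that $j_!$ preserves projectives unconditionally (since $j^*$ is always exact), with no enough-projectives hypothesis, while isolating $i^!$-exactness as the sole input for $i_*$; and your kernel chase for exactness of $j_!$ is sound at every step --- $j^*(K)=0$ via the isomorphism $1_{\mathcal{C}}\to j^*j_!$ of Lemma \ref{2.6}(3), the reconstruction $K\cong i_*i^!(K)$ via \ref{R3} together with the unit isomorphism $1\to i^!i_*$ (the step you rightly flag as the delicate one, and justify correctly through the essential image), and finally $i^!(K)=0$ from left exactness of $i^!$ applied to $K\hookrightarrow j_!(C')$ combined with $i^!j_!=0$ from Lemma \ref{2.6}(4), which is precisely where the hypothesis that $i^*$ is exact enters. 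What each approach buys: the paper's version is shorter and defers the content to the literature; yours makes the lemma self-contained, works at the level of arbitrary recollements of abelian categories rather than module categories, and makes explicit the asymmetric division of labor between the two exactness hypotheses, which the citation-based proof leaves hidden.
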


\begin{proof}
\cite[Proposition 2.6 (2)]{MXZ2022} has showed that $i_*$  functor preserving projective modules whenever $i^!$ is exact. In addition, $i_*$ is an exact functor by \cite[Proposition 2.2 (2)]{MXZ2022}.
Since $\modcat(\alg'')$ has enough projective objects,
then $j_!$ preserves projective modules by \cite[Proposition 2.5 (2)]{MXZ2022}.
Moreover, by using \cite[Lemma 2.4 (1)]{MXZ2022}, we obtain that $j_!$ is exact since $i^*$ is exact,
then this completes the proof of this lemma.
\end{proof}

\begin{lemma} \label{lem3.15}
Let $\calR(\alg',\alg,\alg'')$ be a recollement of module categories.
If  $i^*$ and $i^!$ are exact functors, then for any $M\in\modcat(\alg')$, $N\in\modcat(\alg'')$, and $n\>=1$, we have
\begin{enumerate}[label={\rm(\arabic*)}]
  \item $\Ext_{\alg}^n(i_*(M), i_*(M)) \cong \Ext_{\alg' }^n(M, M) = 0$;
  \item $\Ext_{\alg}^n(j_!(N), j_!(N)) \cong \Ext_{\alg''}^n(N, N) = 0$.
\end{enumerate}
\end{lemma}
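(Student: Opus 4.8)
The plan is to obtain both isomorphisms as direct applications of Lemma \ref{Lem3.1}, feeding it the functorial properties supplied by Lemma \ref{Lem3.14} and then simplifying the right-hand side via the natural isomorphisms of Lemma \ref{2.6}(3). Since every module category over a finite-dimensional algebra has enough projectives, the standing hypotheses of Lemma \ref{Lem3.1} are automatically in force; the only thing I must arrange is an adjoint pair whose \emph{left} member is exact and preserves projective objects.

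For part (1), I would take $F = i_*$ and $G = i^!$. By axiom \ref{R1} the pair $(i_*, i^!)$ is adjoint, and by Lemma \ref{Lem3.14} the hypothesis that $i^*$ and $i^!$ are exact forces $i_*$ to be exact and to preserve projectives. Lemma \ref{Lem3.1} then yields
\[ \Ext_{\alg}^n(i_*(M), i_*(M)) \cong \Ext_{\alg'}^n(M, i^! i_*(M)). \]
Finally, Lemma \ref{2.6}(3) asserts that the natural transformation $1_{\modcat(\alg')} \to i^! i_*$ is a natural isomorphism, so $i^! i_*(M) \cong M$ and the right-hand side collapses to $\Ext_{\alg'}^n(M, M)$.

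For part (2), I would instead take $F = j_!$ and $G = j^*$. Again \ref{R1} gives the adjoint pair $(j_!, j^*)$, and Lemma \ref{Lem3.14} shows that $j_!$ is exact and preserves projectives under the same hypothesis. Lemma \ref{Lem3.1} produces
\[ \Ext_{\alg}^n(j_!(N), j_!(N)) \cong \Ext_{\alg''}^n(N, j^* j_!(N)), \]
and Lemma \ref{2.6}(3), which states that $1_{\modcat(\alg'')} \to j^* j_!$ is a natural isomorphism, identifies $j^* j_!(N) \cong N$, leaving $\Ext_{\alg''}^n(N, N)$. The trailing equalities $\Ext_{\alg'}^n(M, M) = 0$ and $\Ext_{\alg''}^n(N, N) = 0$ are not forced by the recollement alone; they hold exactly when $M$ and $N$ are exceptional, by axiom \ref{E2}, which is the intended setting for applying this lemma in the proof of Theorem \ref{Th3.18}.

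I do not expect a genuine obstacle: the argument is essentially bookkeeping, matching adjoint pairs against the hypotheses of Lemma \ref{Lem3.1}. The single point that demands care is the \emph{direction} of the adjunctions. One must use the pairs in which $i_*$ and $j_!$ occur as the left adjoint—namely $(i_*, i^!)$ and $(j_!, j^*)$, rather than $(i^*, i_*)$ or $(j^*, j_*)$—because Lemma \ref{Lem3.1} requires exactness and preservation of projectives of the left-hand functor, and Lemma \ref{Lem3.14} delivers these properties precisely for $i_*$ and $j_!$.
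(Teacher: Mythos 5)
Your proposal is correct and follows essentially the same route as the paper: both use the adjoint pairs $(i_*, i^!)$ and $(j_!, j^*)$, invoke Lemma \ref{Lem3.14} to conclude that $i_*$ and $j_!$ are exact and preserve projectives, apply Lemma \ref{Lem3.1} to obtain the $\Ext$-isomorphisms, and collapse $i^!i_*(M)\cong M$ and $j^*j_!(N)\cong N$ via Lemma \ref{2.6}(3). Your added remark that the trailing vanishing holds only when $M$ and $N$ are exceptional (by \ref{E2}) is a fair clarification of the lemma's loosely phrased statement, a point the paper's own proof likewise leaves implicit.
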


\begin{proof}
By the definition of recollement $\calR(\alg',\alg,\alg'')$,
we have that $(i_*, i^!)$ is an adjoint pair, see \ref{R1},
and since $i^*$ is exact, we obtain that $i_*$ is an exact functor preserving projective modules by Lemma \ref{Lem3.14}.
Then, by Lemma \ref{Lem3.1},
\begin{center}
  $\Ext_{\alg}^n(i_*(M), i_*(M)) \cong \Ext_{\alg' }^n(M, i^!(i_*(M))) = 0$
\end{center}
holds for all $n\>=1$.
Notice that $i^!i_* = 1_{\modcat(\alg')}$ (see Lemma \ref{2.6} (3)), then the above follows that
\begin{center}
  $\Ext_{\alg}^n(i_*(M), i_*(M)) \cong \Ext_{\alg' }^n(M, M) = 0$
\end{center}
holds for all $n\>=1$.
One can check that $\Ext_{\alg}^n(j_!(N), j_!(N)) \cong \Ext_{\alg''}^n(N, N) = 0$ holds for all $n\>=1$ by a similar way.
\end{proof}

\begin{lemma} \label{Lem3.16}
Let $M$ and $N$ be exceptional modules in $\modcat(\alg')$ and $\modcat(\alg'')$, respectively.
If $i^*$ and $i^!$ are exact functors, then $i_*(M)$ and $j_!(N)$ are exceptional right $\alg$-modules in $\modcat(\alg)$.
\end{lemma}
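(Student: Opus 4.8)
The plan is to verify the two defining conditions of exceptionality, \ref{E1} and \ref{E2}, for each of the modules $i_*(M)$ and $j_!(N)$ separately. Since the homological vanishing condition \ref{E2} has essentially been packaged in Lemma \ref{lem3.15}, the genuine content of the argument is condition \ref{E1}, from which I will also read off indecomposability.

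First I would establish \ref{E1}. As $M$ is exceptional in $\modcat(\alg')$, condition \ref{E1} gives $\End_{\alg'}(M) \cong \KK$. The recollement axiom \ref{R2} guarantees that $i_*$ is fully faithful, so I would transport this isomorphism across $i_*$ to obtain $\Hom_{\alg}(i_*(M), i_*(M)) \cong \Hom_{\alg'}(M, M) \cong \KK$. Because an endomorphism ring isomorphic to the field $\KK$ admits no idempotents other than $0$ and $1$, this simultaneously forces $i_*(M)$ to be indecomposable, so that $i_*(M)$ is a legitimate candidate for an exceptional module. The identical reasoning applies to $j_!(N)$: axiom \ref{R2} also asserts that $j_!$ is fully faithful, whence $\Hom_{\alg}(j_!(N), j_!(N)) \cong \Hom_{\alg''}(N, N) \cong \KK$, yielding both \ref{E1} and the indecomposability of $j_!(N)$.

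Next I would dispose of \ref{E2}, which is immediate from Lemma \ref{lem3.15}. Under the standing hypothesis that $i^*$ and $i^!$ are exact, that lemma supplies the isomorphisms $\Ext_{\alg}^n(i_*(M), i_*(M)) \cong \Ext_{\alg'}^n(M, M)$ and $\Ext_{\alg}^n(j_!(N), j_!(N)) \cong \Ext_{\alg''}^n(N, N)$ for every $n\>=1$, and the right-hand sides vanish by \ref{E2} applied to the exceptional modules $M$ and $N$. Assembling the two conditions shows that $i_*(M)$ and $j_!(N)$ are indecomposable and satisfy \ref{E1} and \ref{E2}, hence are exceptional in $\modcat(\alg)$.

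I do not expect a genuine obstacle, since the homological input for \ref{E2} is already encapsulated in Lemma \ref{lem3.15}, while \ref{E1} reduces to the fully faithfulness built into the recollement axiom \ref{R2}. The one point I would be careful to state explicitly is the deduction of indecomposability from $\End_{\alg} \cong \KK$, because the definition of an exceptional module demands indecomposability \emph{in addition} to the two homological conditions, and this is what upgrades the verification of \ref{E1} and \ref{E2} into the conclusion that $i_*(M)$ and $j_!(N)$ are exceptional.
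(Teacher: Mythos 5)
Your proposal is correct and follows essentially the same route as the paper's proof: condition \ref{E1} via the full faithfulness of $i_*$ and $j_!$ from axiom \ref{R2}, and condition \ref{E2} via Lemma \ref{lem3.15}. Your explicit remark that $\End_{\alg}(i_*(M)) \cong \KK$ forces indecomposability (since a field has no nontrivial idempotents) is a small point the paper leaves implicit, and it is correctly handled.
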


\begin{proof}
We need show that \ref{E1} and \ref{E2} hold for $i_*(M)$ and $j_!(N)$ in this proof.

First of all, by \ref{R2}, $i_*$ is fully faithful, then we have the following isomorphism
\begin{align*}
  \End_{\alg}(i_*(M)) \cong \End_{\alg'}(M) \cong \KK
\end{align*}
since $M$ is exceptional in $\modcat(\alg')$. Thus, \ref{E1} holds for $i_*(M)$.
Similarly, one can check that \ref{E1} holds for $j_!(N)$ by using $j_!$ being fully faithful
and $N$ being exceptional in $\modcat(\alg'')$.

Second, for any $n \>= 1$, we have
\begin{center}
  $\Ext_{\alg' }^n(M, M) \cong \Ext_{\alg}^n(i_*(M),i_*(M)) = 0$
\end{center}
by Lemma \ref{lem3.15}, then \ref{E2} hold for $i_*(M)$.
One can check that \ref{E2} holds for $j_!(N)$ by a similar way.
\end{proof}

\begin{lemma} \label{Lem3.17}
Let $(M_1, M_2)$ and $(N_1, N_2)$ be exceptional pairs in $\modcat(\alg')$ and $\modcat(\alg'')$, respectively.
If $i^*$ and $i^!$ are exact functors, then $(i_*(M_1), i_*(M_2))$ and $(j_!(N_1), j_!(N_2))$ are exceptional pairs in $\modcat(\alg)$.
\end{lemma}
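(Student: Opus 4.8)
The plan is to reduce the statement to the single-module case already established in Lemma \ref{Lem3.16} together with the two remaining axioms \ref{E1'} and \ref{E2'}. Since by hypothesis $i^*$ and $i^!$ are exact, Lemma \ref{Lem3.16} tells us that $i_*(M_1)$, $i_*(M_2)$, $j_!(N_1)$, $j_!(N_2)$ are each exceptional $\alg$-modules, so conditions \ref{E1} and \ref{E2} are automatic for all four modules. It therefore suffices to verify the ``mixed'' vanishing conditions \ref{E1'} and \ref{E2'} for the pairs $(i_*(M_1), i_*(M_2))$ and $(j_!(N_1), j_!(N_2))$.

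For \ref{E1'} I would invoke full faithfulness. By \ref{R2} the functor $i_*$ is fully faithful, so
\[ \Hom_{\alg}(i_*(M_2), i_*(M_1)) \cong \Hom_{\alg'}(M_2, M_1) = 0, \]
the last equality being \ref{E1'} for the exceptional pair $(M_1, M_2)$. The same computation with $j_!$ (again fully faithful by \ref{R2}) gives $\Hom_{\alg}(j_!(N_2), j_!(N_1)) \cong \Hom_{\alg''}(N_2, N_1) = 0$.

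For \ref{E2'} I would apply Lemma \ref{Lem3.1} with the appropriate adjunctions. Since $i^*$ is exact, Lemma \ref{Lem3.14} shows $i_*$ is exact and preserves projectives, and $(i_*, i^!)$ is an adjoint pair by \ref{R1}; hence Lemma \ref{Lem3.1} yields
\[ \Ext^n_{\alg}(i_*(M_2), i_*(M_1)) \cong \Ext^n_{\alg'}(M_2, i^! i_*(M_1)) \]
for all $n$. Using the natural isomorphism $i^! i_* \cong 1_{\modcat(\alg')}$ from Lemma \ref{2.6}(3) and then \ref{E2'} for $(M_1, M_2)$, the right-hand side becomes $\Ext^n_{\alg'}(M_2, M_1) = 0$. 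Symmetrically, $j_!$ is exact and preserves projectives (Lemma \ref{Lem3.14}) and $(j_!, j^*)$ is an adjoint pair by \ref{R1}, so Lemma \ref{Lem3.1} together with $j^* j_! \cong 1_{\modcat(\alg'')}$ (Lemma \ref{2.6}(3)) gives $\Ext^n_{\alg}(j_!(N_2), j_!(N_1)) \cong \Ext^n_{\alg''}(N_2, N_1) = 0$. With \ref{E1}, \ref{E2}, \ref{E1'} and \ref{E2'} all verified, both pairs are exceptional.

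This argument is essentially bookkeeping layered on the earlier lemmas, so I do not anticipate a genuine obstacle. The only point demanding care is matching each functor to its correct adjunction — taking $i^!$ as the right adjoint of $i_*$, and $j^*$ as the right adjoint of $j_!$ — and checking that the relevant unit transformations $i^! i_* \cong 1$ and $j^* j_! \cong 1$ are precisely the \emph{isomorphisms} listed in Lemma \ref{2.6}(3), rather than any of the non-invertible natural transformations in the recollement. That bookkeeping is where whatever difficulty exists resides.
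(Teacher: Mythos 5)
Your proposal is correct and follows essentially the same route as the paper's own proof: Lemma \ref{Lem3.16} for exceptionality of the individual images, full faithfulness of $i_*$ and $j_!$ for \ref{E1'}, and Lemma \ref{Lem3.1} (via Lemma \ref{Lem3.14} and the adjunctions $(i_*,i^!)$, $(j_!,j^*)$) combined with the isomorphisms $i^!i_*\cong 1$ and $j^*j_!\cong 1$ from Lemma \ref{2.6}(3) for \ref{E2'}. If anything, your citation of \ref{R2} for full faithfulness is more accurate than the paper's reference to Lemma \ref{2.6}(3) at that step, and you spell out the $j_!$ case that the paper leaves as ``similar.''
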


\begin{proof}
since modules $M_1$ and $M_2$ are exceptional in $\modcat(\alg')$ and $i^{*}$ and $i^{!}$ are exact,
then $i_*(M_1)$ and $i_*(M_2)$ are exceptional in $\modcat(\alg)$ by Lemma \ref{Lem3.16}.
By Lemma \ref{2.6} (3), $i_*$ is fully faithful, then
\[ \Hom_{\alg}(i_*(M_2), i_*(M_1)) \cong \Hom_{\alg'}(M_2, M_1) = 0, \]
i.e, \ref{E1'} holds for $(i_*(M_1), i_*(M_2))$.
Moreover, for any $n\>= 1$, we have
\[ \Ext_{\alg}^n(i_*(M_2), i_*(M_1))
  \mathop{\cong}\limits^{\spadesuit} \Ext_{\alg}^n(M_2, i^!(i_*(M_1)))
  \mathop{\cong}\limits^{\clubsuit} \Ext_{\alg}^n(M_2, M_1)
  \mathop{=}\limits^{\heart} 0 \]
by using Lemmas \ref{Lem3.14} and \ref{Lem3.1} (see the isomorphism marked by $\spadesuit$),
Lemma \ref{2.6} (3) (see the isomorphism marked by $\clubsuit$),
and $(M_1, M_2)$ being an exceptional pair (see the equation marked by $\heart$).
It follows that \ref{E2'} holds for $(i_*(M_1), i_*(M_2))$.
Therefore, $(i_*(M_1), i_*(M_2))$ is an exceptional pairs in $\modcat(\alg)$.

One can prove that $(j_!(N_1), j_!(N_2))$ is an exceptional pair in $\modcat(\alg)$ by a similar way.
\end{proof}

\begin{theorem} \label{Th3.18}
Let $(X_1, \ldots, X_s)$ and $(Y_1, \ldots, Y_t)$ be two exceptional sequences
in $\modcat(\alg')$ and $\modcat(\alg'')$, respectively.
If $i^*$ and $i^!$ are exact functors, then $(i_*(X_1), \ldots, i_*(X_s))$
and $(j_!(Y_1), \ldots, j_!(Y_t))$ are exceptional sequences in $\modcat(\alg)$.
\end{theorem}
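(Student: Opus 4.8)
The plan is to reduce the statement entirely to the two preceding lemmas, since the definition of an exceptional sequence decomposes into conditions that involve only single modules and ordered pairs. Recall that a sequence $(i_*(X_1), \ldots, i_*(X_s))$ in $\modcat(\alg)$ is exceptional precisely when each term $i_*(X_k)$ is an exceptional module (that is, satisfies \ref{E1} and \ref{E2}) and each ordered pair $(i_*(X_i), i_*(X_j))$ with $1 \=< i < j \=< s$ is an exceptional pair (that is, satisfies \ref{E1'} and \ref{E2'}). There are no conditions involving triples or higher-order tuples, so verifying these two families of conditions suffices. This mirrors exactly the way Theorem~\ref{Th2.1} was assembled from Lemmas~\ref{Lem3.5} and~\ref{Lem3.6}.

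First I would handle the individual terms. Since $(X_1, \ldots, X_s)$ is an exceptional sequence in $\modcat(\alg')$, each $X_k$ is by definition an exceptional module. The hypothesis that $i^*$ and $i^!$ are exact functors is exactly what Lemma~\ref{Lem3.16} requires, so applying it to each $X_k$ yields that every $i_*(X_k)$ is an exceptional module in $\modcat(\alg)$; conditions \ref{E1} and \ref{E2} thus hold for each term.

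Next I would handle the ordered pairs. Fix $1 \=< i < j \=< s$. Because $(X_1, \ldots, X_s)$ is an exceptional sequence, the pair $(X_i, X_j)$ is an exceptional pair in $\modcat(\alg')$. Invoking Lemma~\ref{Lem3.17} (again using that $i^*$ and $i^!$ are exact) shows that $(i_*(X_i), i_*(X_j))$ is an exceptional pair in $\modcat(\alg)$, so \ref{E1'} and \ref{E2'} hold. Combining this with the previous paragraph gives that $(i_*(X_1), \ldots, i_*(X_s))$ is an exceptional sequence. The argument for $(j_!(Y_1), \ldots, j_!(Y_t))$ is verbatim the same, using the $j_!$-parts of Lemmas~\ref{Lem3.16} and~\ref{Lem3.17}, which hold under the identical hypothesis that $i^*$ and $i^!$ are exact.

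I do not expect any genuine obstacle here, since all the homological work has already been carried out in Lemmas~\ref{Lem3.16} and~\ref{Lem3.17}; the only point to observe is the structural one, namely that the defining conditions of an exceptional sequence never couple more than two of its terms at a time. The functors $i_*$ and $j_!$ are applied termwise, and exceptionality of each term together with the exceptional-pair condition on each $i<j$ pair is preserved, so the sequence structure is transported intact.
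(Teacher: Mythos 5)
Your proposal is correct and takes essentially the same route as the paper's own proof: both reduce the theorem to Lemma~\ref{Lem3.16} applied termwise and Lemma~\ref{Lem3.17} applied to each ordered pair with $i<j$, then handle $j_!$ by the identical argument. If anything, your writing the pair as $(X_i, X_j)$ for $i<j$ is more faithful to the paper's definition of an exceptional pair than the paper's proof itself, which states the pair in the reversed order $(X_j, X_i)$ --- an apparent slip that your version quietly corrects.
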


\begin{proof}
All modules $X_1$, $\ldots$, $X_s$ are exceptional in $\modcat(\alg')$ by the definition of exceptional sequence,
then Lemma \ref{Lem3.16} admits that $i_*(X_1)$, $\ldots$, $i_*(X_s)$ are exceptional in $\modcat(\alg)$.
Moreover, for each $1\=< i<j \=< s$, we have that $(X_j, X_i)$ is an exceptional pair in $\modcat(\alg')$
then, by the definition of exceptional sequence, $(i_*(X_j), i_*(X_i))$ is also
an exceptional pair in $\modcat(\alg)$ by using Lemma \ref{Lem3.17}.
Thus, $(i_*(X_1), \ldots, i_*(X_s))$ is an exceptional sequence in $\modcat(\alg)$.

One can prove that $(j_!(Y_1), \ldots, j_!(Y_t))$ is an exceptional sequence in $\modcat(\alg)$ by a similar way.
\end{proof}

\section{Examples} \label{Sect:Examp}

\subsection{Two examples for Corollary \ref{Cor3.2}}

First of all, we provide two examples for Corollary \ref{Cor3.2}, see Examples \ref{Ex3.3} and \ref{Ex3.4}.
In Example \ref{Ex3.3}, the instance illustrates that the condition ``${_A}R$ is projective'' is indispensable.
In Example \ref{Ex3.4}, we provide an instance for projective split extension of a finite-dimensional algebra $A$ by the nilpotent $(A,A)$-bimodule $Q$.

\begin{example} \label{Ex3.3} \rm
Let $A=\KK\Q_A/\I_A$ be a finite-dimensional algebra given by
$$ \Q_A = \xymatrix{
 1\ar[r]_{\alpha}    &     2\ar[r]_{\beta} & 3}
 ~\text{and}~
 \I_A = \langle \alpha\beta \rangle $$
and $R=\KK\Q_R/\I_R$ be a finite-dimensional algebra given by
$$ \Q_R = \xymatrix{
 1\ar[r]_{\alpha}    & 2\ar[r]_{\beta}  &3\ar@/_1.8pc/[ll]_{\gamma}}
 ~\text{and}~ \I_R = \langle \alpha\beta, \beta \gamma, \gamma\alpha \rangle
$$
Then $Q = \langle \gamma \rangle$ is an $(A,A)$-bimodule given by the left $A$-action
\[ A \times Q \to Q, ~ (\wp, k\gamma) \mapsto \wp\gamma =
  \begin{cases}
    \gamma, & \text{if}~\wp=\e_3; \\
    0, & \text{if}~\wp~\text{is another path}
  \end{cases} \]
  \begin{center}
    (and $\e_v$ is the path of length zero corresponded by the vertex $v$)
  \end{center}
and the right $A$-action
\[ Q \times A \to Q, ~ (\gamma, \wp) \mapsto \gamma\wp =
  \begin{cases}
    \gamma, & \text{if}~\wp=\e_1; \\
    0, & \text{if}~\wp~\text{is another path}.
  \end{cases} \]
By $\Q = \KK\gamma \subseteq \rad R = \KK\alpha+\KK\beta+\KK\gamma$,
$R$ is a split extension of $A$ by the nilpotent bimodule $Q$.
Consider the projective resolution
\[ \mathbf{P}(M)= \ \
0 \To{} (3)_A \To{} ({^2_3})_A \To{} ({^1_2})_A  \To{} (1)_A \To{}{} 0 \]
of $M = (1)_A \in \modcat(A)$.
By $R = \KK\e_1\oplus_{\KK} \KK\e_2 \oplus_{\KK} \KK\e_3 \oplus_{\KK}
  \KK\alpha \oplus_{\KK} \KK\beta \oplus_{\KK} \KK\gamma + \I_R$,
we have
\begin{align*}
  (1)_A \otimes_A R
\cong\ & (\KK\e_1 + \langle\alpha, \alpha\beta\rangle) \otimes_A R \\
\cong\ & \KK\e_1 + \langle\alpha,\alpha\beta,\beta\gamma,\gamma\alpha\rangle \\
\cong\ & (1)_R,
\\
  ({^1_2})_A \otimes_A R
\cong\ & (\KK\e_1 \oplus_{\KK} \KK\alpha + \langle\alpha\beta\rangle) \otimes_A R \\
\cong\ & \KK\e_1 \oplus_{\KK} \KK\alpha + \langle\alpha\beta, \beta\gamma,\gamma\alpha\rangle \\
\cong\ & ({^1_2})_R,
\\
  ({^2_3})_A \otimes_A R
\cong\ & (\KK\e_2 \oplus_{\KK} \KK\beta) \otimes_A R \\
\cong\ & \KK\e_2 \oplus_{\KK} \KK\beta + \langle \beta\gamma \rangle \\
\cong\ & ({^2_3})_R,
\\   (3)_A \otimes_A R
\cong\ & \KK\e_3 \otimes_A R \\
\cong\ & \KK\e_3 \oplus_{\KK} \KK\gamma + \langle\gamma\alpha\rangle \\
\cong\ & ({^3_1})_R,
\end{align*}
then the sequence obtained by applying $F := -\otimes_A R$ to the projective resolution of $M$ is
\[ \mathbf{P}(M)\otimes_A R = \ \
0 \To{} ({^3_1})_R \To{} ({^2_3})_R \To{} ({^1_2})_R \To{} (1)_R \To{} 0
\]
which is not a projective resolution of $F(M) = M\otimes_A R = (1)_R$ since it is not left exact
(to be precise, $R = A\oplus Q$, as a right $A$-module, is not projective).
Indeed, one can check that the projective resolution of $(1)_R$ is
\[ \mathbf{P}(M\otimes_A R) = \ \
\cdots \To{} ({^2_3})_R \To{} ({^1_2})_R
  \To{} ({^3_1})_R \To{} ({^2_3})_R \To{} ({^1_2})_R \To{} (1)_R \To{} 0
 \]
Applying $\Hom_R(-,M\otimes_A R)$ to $\mathbf{P}(M\otimes_A R)$, we get the following complex
\begin{align*}
 \Hom_R&(\mathbf{P}(M\otimes_A R), M\otimes_A R) = \\
 0 & \To{} \Hom_{R}((1)_R,(1)_R) \To{} \Hom_{R}(({^1_2})_R,(1)_R) \\
   & \To{} \Hom_{R}(({^2_3})_R,(1)_R) = 0 \To{} \Hom_{R}(({^3_1})_R,(1)_R) = 0 \To{} \Hom_{R}(({^1_3})_R,(1)_R) \\
   & \To{} \Hom_{R}(({^2_3})_R,(1)_R) = 0 \To{} \Hom_{R}(({^3_1})_R,(1)_R) = 0 \To{} \Hom_{R}(({^1_3})_R,(1)_R) \\
   & \To{} \cdots.
\end{align*}
It follows that
\[ \Ext_R^3(M\otimes_A R, M \otimes_A R) \cong \Hom_R(({^1_3})_R,(1)_R) \cong_{\KK} \KK \ne 0, \]
where ``$\cong_{\KK}$'' represents the isomorphism of $\KK$-vector spaces.

On the other hand, $G=\Hom_R(R,-)$ is the functor such that $(-\otimes_A R, \Hom_R(R,-))$ is an adjoint pair,
and we have $\Hom_R(R,M\otimes_AR) = (1)_A$.
Applying $\Hom_R(-,\Hom_R(R,M\otimes_A R)) = \Hom_R(-,(1)_A)$ to $\mathbf{P}(M)$, we have
\begin{align*}
 \Hom_R&(\mathbf{P}(M), \Hom_R(R,M\otimes_A R)) = \Hom_R(\mathbf{P}(M),(1)_A) =  \\
 0 & \To{} \Hom_{R}((1)_A,(1)_A) \To{} \Hom_{R}(({^1_2})_A,(1)_A) \\
   & \To{} \Hom_{R}(({^2_3})_A,(1)_A) = 0 \To{} \Hom_{R}((3)_A,(1)_A) = 0 \To{} 0 \\
   & \To{} 0 \To{} 0 \To{} 0 \To{} \cdots.
\end{align*}
It follows that
\begin{align*}
 \Ext_A^3(M,\Hom_R(R,M\otimes_AR)) \cong \Ext_A^3((1)_A, (1)_A) = 0.
\end{align*}
Then we have
\[ 0 \ne \Ext_R^3(M\otimes_A R, M \otimes_A R) \not\cong \Ext_A^3(M,\Hom_R(R,M\otimes_AR)) = 0 \]
in this example.
\end{example}

\begin{example} \label{Ex3.4} \rm
Let $A=\kk\Q_A/\I_A$ be the finite dimensional algebra given in Example \ref{Ex3.3}
and $R'=\kk\Q_R'/\I_R'$ be the finite dimensional algebra whose quiver $\Q_R'$ is the quiver $\Q_R$ of the algebra $R$ given in Example \ref{Ex3.3} and $\I_R' = \langle \alpha\beta \rangle$. Then the epimorphism
\begin{center}
  $h: R' \to R'/\langle \gamma \rangle \cong A$, $\wp+\I_R' \mapsto (\wp+\I_R') +\langle\gamma+\I_R'\rangle$

  (note that ``$+\I_R$'' can be ignore for simplicity)
\end{center}
is a split extension of $A$ by the nilpotent bimodule $Q=\Ker(h) \cong \langle \gamma\rangle \cong \KK\gamma$, where
$Q$ is an $(A,A)$-bimodule given by the left $A$-action
\[ A \times Q \to Q, ~ (\wp, \gamma) \mapsto \wp\gamma =
  \begin{cases}
    \wp\gamma, & \text{if}~\wp\in\{\e_3,\beta\}; \\
    0, & \text{if}~\wp~\text{is another path}
  \end{cases} \]
and the right $A$-action
\[ Q \times A \to Q, ~ (\gamma, \wp) \mapsto \gamma\wp =
  \begin{cases}
    \gamma\wp, & \text{if}~\wp\in\{\e_1,\alpha\}; \\
    0, & \text{if}~\wp~\text{is another path}.
  \end{cases} \]
Then the quiver representation of $Q$, as a left $A$-module, is
\begin{align*}
    (\e_vQ, \varphi_{\alpha})_{v\in(\Q_A)_0,\alpha\in(\Q_A)_1}
=\ & (\xymatrix{
     \e_1\cdot\KK\gamma \ar[r]^{\alpha}
   & \e_2\cdot\KK\gamma \ar[r]^{\beta}
   & \e_3\cdot\KK\gamma }) \\
 \cong\ &
   (\xymatrix{
     0 \ar[r]^{\alpha}
   & 0 \ar[r]^{\beta}
   & \KK }).
\end{align*}
Thus, ${_AQ} = {_A(3)}$. It follows that ${_AR} \cong {_A({^1_2})} \oplus {_A({^2_3})}\oplus {_A(3)}^{\oplus 2}$ is a projective left $A$-module.
%
\end{example}

\subsection{An examples for Theorem \ref{Th2.1}}

\begin{example} \rm
Let $R=\KK\Q$ be a finite-dimensional $\KK$-algebra given by
\[ \Q = \xymatrix{ 1 \ar[r]^{\alpha} & 2 \ar[r]^{\beta} & 3}. \]
Take $A = R/\langle \alpha \rangle$ and $Q = \langle \alpha \rangle$,
then $\xi: R \to A$ be a split extension of $A$ by the nilpotent bimodule $Q$.
It is easy to check that
\[
(1)_A\otimes_{A}R \cong
  \left(\begin{smallmatrix}1\\2\\3\end{smallmatrix}\right)_R, ~
(\begin{smallmatrix}2\\3\end{smallmatrix})_A \otimes_{A} R
\cong (\begin{smallmatrix}2\\3\end{smallmatrix})_R, ~
(3)_A \otimes_{A}R \cong (3)_R,~
\text{and~}
(2)_A \otimes_{A}R \cong (2)_R.\]
Moreover, we have
\[
(1)_A\otimes_{A} Q_A \cong
  \left({^2_3}\right)_A, ~
(\begin{smallmatrix}2\\3\end{smallmatrix})_A \otimes_{A} Q_A
=0~
(3)_A \otimes_{A} Q_A =0~
\text{and~}
(2)_A \otimes_{A} Q_A =0.
\]
Note that $ {_AQ} \cong {_A(1)}^{\oplus 2}$, which is a direct sum of two copies of an indecomposable projective left $A$-module corresponding to the vertex $1$ of the quiver of $A$, then we obtain that $_AR$, as a left $A$-module, is isomorphic to the projective module $A\oplus {_AQ}$.

One can check that there are 9 complete exceptional sequences (=CES for short) in $\modcat(A)$ and 16 CESs in $\modcat(R)$, see \cite{S01}.
It is easy to check that $\Ext_A^1(-, (1)_A\otimes_A Q) = 0$, then, by Theorem \ref{Th2.1}, we have five CESs of $\modcat(R)$ by applying $-\otimes_A R$ to the sequences (a), (d), (e), (f), (i) in Table \ref{fig:6}.
\def\bs{\begin{smallmatrix}}
\def\es{\end{smallmatrix}}
\def\seq{sequence~}
\begin{table}[htbp]\centering
 \begin{tabular}{lcl}
  \toprule
  & CESs of $\modcat(A)$
  &  $-\otimes{_A}R$
  \\ \hline \\
  \seq (a):
  & $(\bs 3   \es,
     \bs 2\\3 \es,
     \bs  1  \es)$
  &  $\color{red}{(
    \bs 3 \es,
    \bs2\\3\es,
    \bs1\\2\\3\es)}$
  \\ \\
  \seq (b):
  & $(\bs 3  \es,
      \bs  1  \es,
      \bs2\\3\es)$
  & $(\bs  3  \es,
      \bs1\\2\\3\es,
      \bs2\\3\es)$
  \\ \\
  \seq (c):
  & $(\bs  1  \es,
      \bs  3  \es,
      \bs2\\3\es)$
  & $(\bs1\\2\\3\es,
      \bs  3  \es,
      \bs2\\3\es)$
  \\ \\
  \seq (d):
  & $(\bs2\\3\es,
      \bs 2 \es,
      \bs 1 \es)$
  &  \color{red}{
      $(\bs2\\3\es,
        \bs  2  \es,
        \bs1\\2\\3\es)$
        }
  \\ \\
  \seq (e):
  & $(\bs2\\3\es,
      \bs  1  \es,
      \bs  2  \es)$
  & \color{red}{
    $(\bs2\\3\es,
      \bs1\\2\\3\es,
      \bs  2  \es)$ }
  \\ \\
   \seq (f):
   & $(\bs  1  \es,
       \bs2\\3\es,
       \bs 2 \es)$
   & \color{red}{
     $(\bs1\\2\\3\es,
       \bs2\\3\es,
       \bs 2 \es)$
    }
  \\ \\
  \seq (g):
  & $(\bs  1  \es,
      \bs 2 \es,
      \bs 3 \es)$
  & $(\bs1\\2\\3\es,
      \bs 2 \es,
      \bs 3 \es)$
  \\ \\
  \seq (h):
  & $(\bs 2 \es,
      \bs 1 \es,
      \bs 3 \es)$
  & $(\bs 2 \es,
      \bs1\\2\\3\es,
      \bs 3 \es)$
  \\ \\
  \seq (i):
   & $(\bs 2 \es,
      \bs 3 \es,
      \bs 1 \es)$
   & $\color{red}{(
   \bs 2 \es,
   \bs 3 \es,
   \bs1\\2\\3\es)}$
   \\ \\
  \bottomrule
 \end{tabular}
 \caption{The table of CESs.}
\label{fig:6}
\end{table}
\end{example}

\subsection*{Authors' contributions}

D. Liu, H. Gao and Y.-Z. Liu contributed equally to this work.

\section*{Conflicts of Interest}

\section*{Data Availability}

The authors declares that no data was used in this article.

\section*{Declarations}

The authors states that there is no Conflict of interest.

\section*{Funding}

\begin{itemize}
  \item
Dajun Liu is supported by
the National Natural Science Foundation of China (No. 12101003),
and the Natural Science Foundation of Anhui province (No. 2108085QA07);
  \item
Hanpeng Gao is supported by
the National Natural Science Foundation of China (No. 12301041);
  \item
Yu-Zhe Liu is supported by
the National Natural Science Foundation of China (Grant No. 12401042, 12171207),
Guizhou Provincial Basic Research Program (Natural Science) (Grant Nos. ZD[2025]085 and ZK[2024]YiBan066)
and Scientific Research Foundation of Guizhou University (Grant Nos. [2022]53, [2022]65).
\end{itemize}

\section*{Acknowledgements}

We are greatly indebted to Yongyun Qin for helpful suggestions. 





\def\cprime{$'$}

\end{document}